\newcommand{\Bbf}{\mathbf{B}}
\newcommand{\Gcal}{\mathcal{G}}
\newcommand{\Pcal}{\mathcal{P}}
\newcommand{\Rcal}{\mathcal{R}}
\newcommand{\Scal}{\mathcal{S}}
\newcommand{\Xcal}{\mathcal{X}}
\newcommand{\Ycal}{\mathcal{Y}}
\newcommand{\Zcal}{\mathcal{Z}}
\newcommand{\CC}{\mathbb{C}}
\newcommand{\KK}{\mathbb{R}}
\newcommand{\ini}{\ensuremath{\operatorname{ini}_{<}}}
\renewcommand{\subset}{\subseteq}  
\newcommand{\Rb}{\mathbb{R}}
\DeclareMathOperator*{\supp}{supp}  
\theoremstyle{plain}
\newtheorem{lemma}{Lemma}
\newtheorem{thm}[lemma]{Theorem}
\newtheorem{prop}[lemma]{Proposition}
\newtheorem{cor}[lemma]{Corollary}
\theoremstyle{definition}
\newtheorem{defi}[lemma]{Definition}
\newtheorem{ex}[lemma]{Example}
\theoremstyle{remark}
\newtheorem*{rem}{Remark}
\newcommand{\CI}[2]{\left.#1 \Perp #2 \CIhelper}
\newcommand{\CIhelper}[1][\relax]{\def\argone{#1}\def\rrrrrrrelax{\relax}
  \ifx\argone\rrrrrrrelax\right.\else\,\middle|\,#1\right.{}\fi}
\newcommand{\ol}{\overline}
\newcommand{\pin}{p_\text{\textup{in}}}
\newcommand{\xin}{x_\text{\textup{in}}}
\begin{document}

\title{Robustness and Conditional Independence Ideals}

\author{Johannes Rauh${}^{1}$, Nihat Ay${}^{1,2}$}

\email{\{rauh, nay\}@mis.mpg.de}

\address{${}^{1}$Max Planck Institute for Mathematics in the Sciences,
Inselstrasse 22, 
D-04103 Leipzig,Germany}
\address{${}^2$Santa Fe Institute,
1399 Hyde Park Road,
Santa Fe, New Mexico 87501, USA}

\date{\today}

\begin{abstract}
  We study notions of robustness of Markov kernels and probability distribution of a system that is described by $n$
  input random variables and one output random variable.  Markov kernels can be expanded in a series of potentials that
  allow to describe the system's behaviour after knockouts.  Robustness imposes structural constraints on these
  potentials.

  Robustness of probability distributions is defined via conditional independence statements.  These statements can be
  studied algebraically.  The corresponding conditional independence ideals are related to binary edge ideals.  The set
  of robust probability distributions lies on an algebraic variety.  We compute a Gr\"obner basis of this ideal and
  study the irreducible decomposition of the variety.  These algebraic results allow to parametrize the set of all
  robust probability distributions.
\end{abstract}

\maketitle

\section{Introduction}
\label{sec:introduction}

In this article we study a notion of robustness with tools from algebraic geometry.  This work has been initiated in~\cite{AyKrakauer2007:Geometric_Robustness_Theory}.  Connections to algebraic geometry have already been addressed in~\cite{HHHKR10:Binomial_Edge_Ideals}.
We consider $n$ input nodes,
denoted by $1,2,\dots,n$, and one output node, denoted by $0$.  For each $i=0,1,\dots,n$ the state of node $i$ is a
discrete random variable $X_{i}$ taking values in the finite set ${\mathcal X}_i$ of cardinality $d_{i}$.
The joint state space is the set $\tilde\Xcal=\Xcal_{0}\times\Xcal_{1}\times\dots\times\Xcal_{n}$.  For any subset
$S\subseteq\{0,\dots,n\}$ write $X_{S}$ for the random vector $(X_{i})_{i\in S}$; then $X_{S}$ is a
random variable with values in $\Xcal_{S}=\times_{i\in S}\Xcal_{i}$.  For any $x\in\tilde\Xcal$, the \emph{restriction} of
$x$ to a subset $S\subseteq\{0,\dots,n\}$ is the vector $x|_{S}\in\Xcal_{S}$ with $(x|_{S})_{i}=x_{i}$ for all $i\in S$.

We study two possible models for the computation of the output from the input: The first model is a stochastic map
(Markov kernel) $\kappa$ from ${\mathcal X}_{[n]}$ to ${\mathcal X}_0$, that is, $\kappa$ is a function
\[
    \kappa:  {\mathcal X}_{[n]} \times {\mathcal X}_0 \; \to \; [0,1], \qquad (x,y) \; \mapsto \; \kappa(x;y) \, ,
\]
satisfying $\sum_{y \in \Xcal_{0}} \kappa(x;y) = 1$ for all $x$.  The second model is a joint probability distribution $p$
of the random vector $(X_{0},X_{[n]})$.  These two models are related as follows:
The joint probability distribution $p$ of $(X_{0},X_{[n]})$ can be factorized as
\begin{equation*}
  p(y,x) = p(y|x)\pin(x),\qquad\text{ for all }(y,x)\in\tilde\Xcal,
\end{equation*}
where $\pin$ is the distribution of the input nodes and $p(y|x)$ is a conditional distribution, which need not be
unique.  Each possible choice of this conditional distribution defines a Markov kernel $\kappa(x;y) := p(y|x)$.
Conversely, when a Markov kernel $\kappa$ is given, then any input distribution $\pin(x)$ defines a joint distribution
$p(x,y) = \pin(x)\kappa(x;y)$.  The result of our analysis will not depend too much on the precise form of the input
distribution; it will turn out that only the \emph{support} $\supp(\pin):=\{x\in\Xcal : \pin(x)>0\}$ is important.
Similarly, in the analysis of the kernels, there will also be a set $\Scal$ of \emph{``relevant inputs''} that will play
an important role.

We study robustness with respect to knockouts of some of the input nodes $[n]$ in both models.  When a subset $S$ of the
input nodes is knocked out, and only the nodes in $R=[n]\setminus S$ remain, then the behaviour of the system changes.
Without further assumptions, the post-knockout function is not determined by $\kappa$ and has to be specified.  We
therefore consider a further stochastic map $\kappa_R: {\mathcal X}_{R} \times {\mathcal X}_0 \to [0,1]$ as model of the
post-knockout function. A complete specification of the function is given by the family ${(\kappa_A)}_{A \subseteq [n]}$
of all possible post-knockout functions, which we refer to as \emph{functional modalities}.  As a shorthand notation we
denote functional modalities as $(\kappa_{A})$.  The Markov kernel $\kappa$ itself, which describes the normal behaviour of the system without knockouts, can be identified with $\kappa_{[n]}$.

What does it mean for a stochastic map to be robust?  Assume that the input is in state $x$, and that we knock out a set
$S$ of inputs.  Denoting the remaining set of inputs by $R$, we say that $(\kappa_{A})$ is robust in $x = (x_R, x_S)$
against knockout of $S$, if
\begin{equation} \label{invar}
    \kappa(x_R,x_S; x_0) = \kappa_R(x_R ; x_0) \qquad \mbox{for all $x_0 \in {\mathcal X}_0$} \, .
\end{equation}
If $\mathcal{R}$ is a collection of subsets of $[n]$ and if $(\kappa_{A})$ is robust in $x$ against knockout of
$[n]\setminus R$ for all $R\in\mathcal{R}$, then we say that $(\kappa_A)$ is {\em ${\mathcal
    R}$-robust in $x$\/}.
In Section~\ref{sec:robustn-mark-kern}, we consider Gibbs representations of functional
modalities and derive structural constraints on corresponding interaction
potentials that are imposed by robustness properties. These constraints
do not depend on the configuration $x$ in which the functional
modalities are assumed to be robust.

Similar to the case of Markov kernels, the joint probability distribution $p$ does not allow to predict the behaviour of
a perturbed system.  Nevertheless, we can ask whether it is at all possible that the behaviour of the system is robust
against a given knockout of $S$.
Let $\pin$ be an input distribution, and let $(\kappa_{A})$ be the functional modalities of the system.  If $(\kappa_A)$
is 
robust against knockout of $S$ in $x$ for all $x \in \supp(\pin)$, then $X_{0}$ is stochastically independent from
$X_{S}$ given $X_{R}$ (with respect to the joint probability distribution $p(x_{0},\xin) =
\pin(\xin)\kappa(\xin;x_{0})$), where $R=[n]\setminus S$, a fact that will be denoted by $\CI{X_{0}}{X_{S}}[X_{R}]$.
In order to see this, assume $x = (x_R,x_S) \in \supp(\pin)$. Then
\begin{eqnarray*}
p(x_0 \, | \, x_R, x_S) 
   & = & \kappa(x_R , x_S ; x_0 ) \\
   & = & \kappa_R(x_R ; x_0 ) \sum_{x_{S}':(x_{R},x_{S}')\in\supp(\pin) }  
             p(x_{S}' \, | \, x_R ) \,  \\
    & = & \sum_{x_{S}':(x_{R},x_{S}')\in\supp(\pin) }
             p(x_{S}' \, | \, x_R ) \, \kappa(x_{R}, x_S' ; x_0 ) \\
   & = & \sum_{x_{S}':(x_{R},x_{S}')\in\supp(\pin) }
             p(x_{S} ' \, | \, x_R ) \, p(x_0 \, | \, x_{R}, x_S' ) \\
   & = & p(x_0 \, | \, x_R) \, .
\end{eqnarray*}
On the other hand, if 
$\CI{X_{0}}{X_{S}}[X_{R}]$ holds for a joint distribution $p$,
then any family $(\kappa_{A})$ with the property that $\kappa_{A}(x_{A};x_{0}) = p(x_{0}|x_{A})$ whenever $p(x_{A})>0$
 is robust
against knockout of $S$ for all $x\in\supp(\pin)$, where $\pin$ is the marginal input distribution.

Therefore, we call the joint probability distribution $p$ \emph{robust against knockout of $S$}, if
$\CI{X_{0}}{X_{S}}[X_{R}]$.  This means that we do not lose information about the output $X_{0}$, if the subset $S$ of
the inputs is unknown or hidden (or ``knocked out'').
Probability distributions that are robust in this sense are studied in Section~\ref{sec:robustness-and-CI}.
Section~\ref{sec:robust_functions} discusses the case that $X_{0}$ is a deterministic function of the input nodes.  The
symmetric case that $p$ is robust against knockout of any set $S$ of cardinality less than $n-k$ is studied in
Section~\ref{sec:k-robustness}.

The results about robustness are derived from an algebraic theory of generalized binomial edge ideals, which generalize
the binomial edge ideals of~\cite{HHHKR10:Binomial_Edge_Ideals} and~\cite{Ohtani09:Ideals_of_some_2-minors}.  This
theory is presented in Section~\ref{sec:gen-binom-edge-ideals}.  A Gröbner basis is constructed, and it is shown that
these ideals are radical.  Finally, a primary decomposition is computed.
Similar CI statements have recently been studied in~\cite{SwansonTaylor11:Minimial_Primes_of_CI_Ideals}.  That work
discusses what is called $(n-1)$-robustness in Section~\ref{sec:k-robustness}.

\section{Robustness of Markov kernels}
\label{sec:robustn-mark-kern}

Let ${(\kappa_A)}_{A \subseteq [n]}$ be a collection of \emph{functional modalities}, as defined in the introduction.
Instead of providing a list of all functional modes $\kappa_A$, one can describe them in 
more mechanistic terms. In order to illustrate this, we first consider an example which comes from the field of neural networks. 
In that example, we assume that the output node receives an input $x = (x_1,\dots,x_n) \in \{-1,+1\}^n$ and generates the output $+1$ with probability
\begin{eqnarray} \label{neuron}
    \kappa(x_1,\dots,x_n; +1) \; := \; \frac{1}{1 + e^{-\sum_{i = 1}^n w_i \, x_i}} \, ,
\end{eqnarray}
which implies that for an arbitrary output $x_0$ 
 \begin{eqnarray*} 
    \kappa(x_1,\dots,x_n; x_0) \; := \; \frac{ e^{\frac{1}{2} \sum_{i = 1}^n w_i \, x_i \, x_0 }}{e^{\frac{1}{2} \sum_{i = 1}^n w_i \, x_i \cdot (-1)} + 
    e^{ \frac{1}{2} \sum_{i = 1}^n w_i \, x_i \cdot ( + 1 )}} \, .
\end{eqnarray*}
This representation of the stochastic map $\kappa$ has a structure that allows inferring the function after a knockout
of a set $S$ of input nodes, by simply removing the contribution of all the nodes in $S$. In our example (\ref{neuron}),
the post-knockout function is then given as
\[
     \kappa_R(x_R; +1) \; := \; \frac{1}{1 + e^{-\sum_{i \in R} w_i \, x_i}},
\]
where $R=[n]\setminus S$.  This inference of the post-knockout function is based on the decomposition of the sum that
appears in (\ref{neuron}).  Such a decomposition is referred to as a Gibbs representation of $\kappa$ and contains more
information than $\kappa$. More generally, we consider the following model of $(\kappa_A)$
\begin{eqnarray} \label{intpot}
    \kappa_A(x_A; x_0) \; = \; \frac{e^{\sum_{B \subseteq A} \phi_B(x_B,x_0)}}{\sum_{x_0'} e^{ \sum_{B \subseteq A} \phi_B(x_B,x_0')}},
\end{eqnarray}
where the $\phi_{B}$ are functions on ${\mathcal X}_{B} \times {\mathcal X}_0$. 
Clearly, each $\kappa_A$ is strictly positive.    
Using the M\"obius inversion, it is easy to see that each strictly positive family $(\kappa_A)$ has the representation (\ref{intpot}). 
To this end, we simply set
\begin{equation}
  \label{eq:Moebiuspotentials}
  \phi_A (x_A, x_0) \; := \; \sum_{C \subseteq A} (-1)^{| A \setminus C |} \ln \kappa_C(x_C ; x_0 ) \, .
\end{equation}
Note that this representation is not unique: If an arbitrary function of $x_{A}$ is added to the function
$\phi_{A}$, then~\eqref{intpot} does not change.

A single robustness constraint has the following consequences for the $\phi_{A}$.
\begin{prop}
  \label{prop:robkernelcond}
  Let $S\subseteq[n]$ and $R=[n]\setminus S$, and let $(\kappa_A)$ be strictly positive functional modalities with
  Gibbs potentials $(\phi_{A})$.  Then $(\kappa_A)$ is robust in $x$ against knockout of $S$ if and only if
  $\sum_{\substack{B\subseteq[n], B\not\subseteq R}}\phi_{B}(x|_{B},x_{0})$ does not depend on $x_{0}$.
\end{prop}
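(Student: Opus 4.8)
The plan is to unwind the definition of robustness directly in terms of the Gibbs representation~\eqref{intpot}. By definition, $(\kappa_A)$ is robust in $x$ against knockout of $S$ precisely when $\kappa(x_R,x_S;x_0)=\kappa_R(x_R;x_0)$ for all $x_0\in\Xcal_0$, where $\kappa=\kappa_{[n]}$. Substituting~\eqref{intpot} for both $\kappa_{[n]}$ and $\kappa_R$, the denominators are normalization constants independent of $x_0$, so after taking logarithms the equality holds for all $x_0$ if and only if
\begin{equation*}
  \sum_{B\subseteq[n]}\phi_B(x|_B,x_0)-\sum_{B\subseteq R}\phi_B(x|_B,x_0)=\sum_{B\subseteq[n]}\phi_B(x|_B,x_0')-\sum_{B\subseteq R}\phi_B(x|_B,x_0')
\end{equation*}
for all $x_0,x_0'\in\Xcal_0$; equivalently, the difference $\sum_{B\subseteq[n]}\phi_B(x|_B,x_0)-\sum_{B\subseteq R}\phi_B(x|_B,x_0)$ is independent of $x_0$. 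Since $\{B:B\subseteq[n]\}\setminus\{B:B\subseteq R\}=\{B\subseteq[n]:B\not\subseteq R\}$ and all $\phi_B$ with $B\subseteq R$ cancel in the difference, this is exactly the statement that $\sum_{B\subseteq[n],\,B\not\subseteq R}\phi_B(x|_B,x_0)$ does not depend on $x_0$.

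The key steps, in order, are: (i) write out~\eqref{invar} with $\kappa$ and $\kappa_R$ both expanded via~\eqref{intpot}; (ii) observe that a function $f(x_0)>0$ equals $g(x_0)/Z_g$ versus $h(x_0)/Z_h$ for all $x_0$ iff $\log g(x_0)-\log h(x_0)$ is constant in $x_0$, applied with $g=\exp(\sum_{B\subseteq[n]}\phi_B)$, $h=\exp(\sum_{B\subseteq R}\phi_B)$ — here strict positivity is what lets us pass to logarithms without worrying about zeros; (iii) cancel the common terms indexed by $B\subseteq R$ and rewrite the remaining index set as $\{B\subseteq[n]:B\not\subseteq R\}$. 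I should also note explicitly, for the ``only if'' direction, that one must recall $x|_B$ means $x_B$ restricted appropriately and that $\kappa$ evaluated at $(x_R,x_S)$ sees all subsets $B\subseteq[n]$ while $\kappa_R$ at $x_R$ sees only $B\subseteq R$; this bookkeeping is the only place a careless argument could slip.

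There is essentially no hard step here — the statement is a direct translation. The one point requiring a line of care is the elementary fact used in step (ii): two probability vectors on $\Xcal_0$ that are obtained by normalizing $\exp(u(x_0))$ and $\exp(v(x_0))$ respectively coincide if and only if $u-v$ is constant. This follows because equality of the normalized vectors gives $\exp(u(x_0)-v(x_0))=Z_u/Z_v$ for every $x_0$, a constant; conversely if $u(x_0)-v(x_0)=c$ then $\exp(u(x_0))=e^c\exp(v(x_0))$ and normalization kills the factor $e^c$. Everything else is substitution and cancellation, so the proof is short. I would present it essentially as the displayed chain of equivalences above, preceded by the remark on strict positivity and followed by the index-set rewriting.
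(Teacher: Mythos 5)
Your proof is correct, and it takes a mildly but genuinely different route from the paper's. The paper argues through the canonical M\"obius potentials $\tilde\phi_A$ of~\eqref{eq:Moebiuspotentials}: for these, $\sum_{B\subseteq A}\tilde\phi_B(x|_B,x_0)=\ln\kappa_A(x|_A;x_0)$ exactly (the normalizer in~\eqref{intpot} is $1$), so~\eqref{invar} becomes the sharper statement $\sum_{B\not\subseteq R}\tilde\phi_B(x|_B,x_0)=0$ for all $x_0$, and the claim for arbitrary Gibbs potentials is then transferred using the fact that $\phi_B-\tilde\phi_B$ is independent of $x_0$ for fixed $x$ (a fact that itself rests on a small M\"obius computation, hinted at by the earlier non-uniqueness remark). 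You instead work directly with the given representation~\eqref{intpot} and the elementary lemma that two kernels obtained by normalizing $\exp(u(x_0))$ and $\exp(v(x_0))$ coincide if and only if $u-v$ is constant in $x_0$; the common terms $B\subseteq R$ cancel and the remaining index set is $\{B\not\subseteq R\}$. What each buys: your argument is more self-contained, avoids invoking the relation between $(\phi_A)$ and $(\tilde\phi_A)$, and your normalization lemma transparently handles the converse direction (the constant is absorbed by normalization), a point the paper's one-line proof glosses over slightly (for the M\"obius potentials the constant is forced to be $0$ because both kernels sum to $1$ over $x_0$); the paper's route, on the other hand, produces the exact ``$=0$'' form of the condition for the canonical potentials, which is precisely what is used later, e.g.\ in~\eqref{eq:ex3:potcond}. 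No gaps; your bookkeeping of $x|_B$ and of which subsets $B$ appear in $\kappa_{[n]}$ versus $\kappa_R$ is exactly right.
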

\begin{proof}
  Denote $\tilde\phi_{A}$ the potentials defined via~\eqref{eq:Moebiuspotentials}.  Then~\eqref{invar} is equivalent to
  \begin{equation*}
    \sum_{B\subseteq[n]}\tilde\phi_{B}(x|_{B},x_{0})
    =
    \sum_{B\subseteq R}\tilde\phi_{B}(x|_{B},x_{0})
    \quad\Longleftrightarrow\quad
    \sum_{\substack{B\subseteq[n]\\B\not\subseteq R}}\tilde\phi_{B}(x_{B},x_{0})
    = 0.
  \end{equation*}
  The statement follows from the fact that $\phi_{B}(x|_{B};x_{0}) - \tilde\phi_{B}(x|_{B};x_{0})$ is independent of
  $x_{0}$ (for fixed $x$).
\end{proof}
Does ${\mathcal R}$-robustness in $x$ imply any structural constraints on $(\kappa_A)$? In order to answer this
question, we restrict attention to the case
${\mathcal R} = {\mathcal R}_k := \{ R \subseteq [n] \; : \; | R | \geq k \}$.

If $(\kappa_{A})$ is $\Rcal_{k}$-robust on a set $\Scal$, then the corresponding conditions imposed by
Proposition~\ref{prop:robkernelcond} depend on $\Scal$.  In this section, we are interested in conditions that are
independent of $\Scal$.  Such conditions allow to define sets of functional modalities that contain all
$\Rcal_{k}$-robust functional modalities for all possible sets $\Scal$.  If the set $\Scal$ (which will be the support
of the input distribution in Section~\ref{sec:robustness-and-CI}) is unknown from the beginning, then the system can
choose its policy within such a restricted set of functional modalities.

Denote $K_{k}$ the set of all functional modalities $(\kappa_{A})$ such that
there exist potentials $\phi_{A}$ 
of the form
\begin{equation*}
  \phi_{A}(x_{A};x_{0}) = \sum_{\substack{B\subseteq A \\ |B|\le k}}\Psi_{B,A}(x_{B};x_{0}),
\end{equation*}
where $\Psi_{B,A}$ is an arbitrary function $\Rb^{\Xcal_{B}\times\Xcal_{0}}\to\Rb$.
The set $K_{k}$ is called the \emph{family of $k$-interaction functional modalities}.  It contains the subset $\tilde
K_{k}$ of those functional modalities $(\kappa_{A})$ where the functions $\Psi_{B,A}$ additionally satisfy
\begin{equation*}
  (-1)^{|A|}\Psi_{B,A}(x_{B};x_{0}) = (-1)^{|A'|}\Psi_{B,A'}(x_{B};x_{0}),\qquad\text{ whenever $B\subseteq A\cap A'$ and $|B|< k$},
\end{equation*}
and
\begin{equation*}
  \sum_{l=0}^{|A'|-k} \frac{(-1)^{| A' | - l}}{ \binom{l + k}{k} }
  \Psi_{B,A}(x_{B};x_{0}) 
  =
  \sum_{l=0}^{|A|-k} \frac{(-1)^{| A | - l}}{ \binom{l + k}{k} }
  \Psi_{B,A'}(x_{B};x_{0}),\quad\text{ if $B\subseteq A\cap A'$ and $|B| = k$},
\end{equation*}
for all $x_{B}\in\Xcal_{B}$ and $x_{0}\in\Xcal_{0}$.
Both $K_{k}$ and $\tilde K_{k}$ only contain strictly positive kernels.  Therefore, we are also interested in the
resepective \emph{closures} of these two families with respect to the usual real topology on the space of matrices.

The following holds:
\begin{prop}
  \label{prop:rob-kernel-k-interactions}
  Let ${\mathcal S}$ be a subset of ${\mathcal X}_{[n]}$ and let $(\kappa_A)$ be functional modalities that
  are 
  ${\mathcal R}_k$-robust in $x$ for all $x \in {\mathcal S}$.  Then there exist functional modalities
  $(\tilde\kappa_{A})$ in the closure of $\tilde K_{k}$ such that $\kappa_{A}(x|_{A}) = \tilde\kappa_{A}(x|_{A})$
  for all $A$
  and all $x\in\Scal$.  In particular, $\tilde\kappa_{A}$ belongs to the closure of the family of $k$-interactions.
\end{prop}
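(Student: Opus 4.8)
\emph{Plan.} The idea is to first reduce to strictly positive modalities, then to realize a suitable member of $\tilde K_k$ by choosing its interaction functions explicitly, and finally to check the required equality on $\Scal$ using $\Rcal_k$-robustness directly; the genuine work is a combinatorial identity about the weights defining $\tilde K_k$.

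\emph{Step 1 (reduction to strictly positive $(\kappa_A)$).} For $\epsilon\in(0,1)$ put $\kappa^{(\epsilon)}_A:=(1-\epsilon)\kappa_A+\epsilon\,u_A$ with $u_A$ the uniform kernel $\Xcal_A\times\Xcal_0\to[0,1]$. These are strictly positive, and since robustness in $x$ against knockout of $S$ is the equality $\kappa_{[n]}(x;x_0)=\kappa_{[n]\setminus S}(x|_{[n]\setminus S};x_0)$ — an identity between affine functions of the $\kappa_A$ subjected to the \emph{same} uniform shift — the family $(\kappa^{(\epsilon)}_A)$ is again $\Rcal_k$-robust in every $x\in\Scal$. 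Granting the strictly positive case, one gets $(\tilde\kappa^{(\epsilon)}_A)\in\tilde K_k$ agreeing with $(\kappa^{(\epsilon)}_A)$ on $\Scal|_A:=\{x|_A:x\in\Scal\}$ for all $A$; by compactness of the space of functional modalities a subsequence $\epsilon\downarrow0$ converges to some $(\tilde\kappa_A)\in\cl(\tilde K_k)$, which then agrees with $(\kappa_A)$ on $\Scal|_A$ by continuity. So we may assume $(\kappa_A)$ is strictly positive.

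\emph{Step 2 (parametrizing $\tilde K_k$).} A calculation shows that the two relations defining $\tilde K_k$ force every member to be determined by a single collection of interaction functions $\Theta_B\in\Rb^{\Xcal_B\times\Xcal_0}$, $|B|\le k$: the first relation gives $\Psi_{B,A}=(-1)^{|A|}\Theta_B$ for $|B|<k$, and the second makes $\Psi_{B,A}$ with $|B|=k$ a scalar multiple of $\Theta_B$ depending only on $|A|$. Substituting into $\ln\kappa_A=\sum_{C\subseteq A}\phi_C-\ln Z_A$ and summing the coefficient of each $\Theta_B$ over the interval $\{C:B\subseteq C\subseteq A\}$ of the Boolean lattice, one obtains
\begin{equation*}
  \ln\tilde\kappa_A(x_A;x_0)=
  \begin{cases}
    (-1)^{|A|}\,\Theta_A(x_A;x_0)-\ln Z_A(x_A), & |A|<k,\\[1mm]
    \dfrac{1}{\binom{|A|}{k}}\displaystyle\sum_{\substack{D\subseteq A,\ |D|=k}}\Theta_D(x_D;x_0)-\ln Z_A(x_A), & |A|\ge k.
  \end{cases}
\end{equation*}
Two facts are used here: the contribution of $\Theta_B$ with $|B|<k$ cancels whenever $|A|>|B|$, via $\sum_{B\subseteq C\subseteq A}(-1)^{|C|}=0$; and the level-$k$ weights sum over the interval $[D,A]$ to $1/\binom{|A|}{k}$, which is exactly the inverse binomial transform identity that the factors $\binom{l+k}{k}$ are tuned to produce. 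In particular, for $|A|\le k$ the kernel $\tilde\kappa_A$ depends only on $\Theta_A$, so by choosing $\Theta_A$ (and extending it off $\Scal|_A$ arbitrarily) we can make $\tilde\kappa_A$ equal any prescribed strictly positive kernel on $\Scal|_A$.

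\emph{Step 3 (choice of the $\Theta_B$ and verification), and the main obstacle.} Using the last remark of Step 2, pick for each $B$ with $|B|\le k$ a function $\Theta_B$ so that the associated $\tilde\kappa_B$ equals $\kappa_B$ on $\Scal|_B$, and let $(\tilde\kappa_A)\in\tilde K_k$ be the resulting family. For $|A|<k$ we are done by construction. For $|A|\ge k$, $\Rcal_k$-robustness gives, for every $x\in\Scal$ and every $k$-subset $D\subseteq A$, that $\kappa_D(x|_D;x_0)=\kappa_{[n]}(x;x_0)=\kappa_A(x|_A;x_0)=:\lambda(x;x_0)$; since $\tilde\kappa_D=\kappa_D$ on $\Scal|_D$, the displayed formula yields for $x\in\Scal$
\begin{equation*}
  \ln\tilde\kappa_A(x|_A;x_0)=\frac{1}{\binom{|A|}{k}}\sum_{\substack{D\subseteq A,\ |D|=k}}\ln\lambda(x;x_0)-\ln Z_A(x|_A)=\ln\lambda(x;x_0)-\ln Z_A(x|_A),
\end{equation*}
and normalizing over $x_0$ (as $\lambda(x;\cdot)$ is already a probability vector) gives $\tilde\kappa_A(x|_A;\cdot)=\lambda(x;\cdot)=\kappa_A(x|_A;\cdot)$. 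This proves the proposition; the final assertion follows from $\tilde K_k\subseteq K_k$, hence $\cl(\tilde K_k)\subseteq\cl(K_k)$. The hard part is the coefficient computation of Step 2 — above all, verifying that the level-$k$ $\tilde K_k$-weights sum over a lattice interval to $1/\binom{|A|}{k}$; this is the inverse-binomial-transform identity, and recognizing that the binomial factors and alternating signs in the definition of $\tilde K_k$ are engineered precisely for it is where the effort lies. Once the formula for $\ln\tilde\kappa_A$ is established, the rest is bookkeeping, together with the observation in Step 1 that the limiting family need only lie in $\cl(\tilde K_k)$.
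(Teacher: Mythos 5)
Your overall strategy is the reverse of the paper's and is workable in principle: the paper starts from the M\"obius potentials of $(\kappa_A)$, uses $\Rcal_k$-robustness on $\Scal$ to replace $\ln\kappa_C(x|_C;x_0)$ for $|C|\ge k$ by the average $\tfrac{1}{\binom{|C|}{k}}\sum_{B\subseteq C,|B|=k}\ln\kappa_B(x|_B;x_0)$, and then collects terms to obtain order-$\le k$ potentials with coefficients $\alpha_{A,C}$ depending only on cardinalities, finally setting $\Psi_{C,A}=\alpha_{A,C}\ln\kappa_C$; you instead parametrize $\tilde K_k$ by free functions $\Theta_B$, derive a closed form for $\ln\tilde\kappa_A$, fit the levels $|A|\le k$ on $\Scal$, and check the levels $|A|>k$ via robustness. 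Your Step~1 (mixing with the uniform kernel, passing to closures) is the same as the paper's and is fine.

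The genuine gap is precisely the point you flag as ``the hard part'': the interval-sum identity in Step~2 is asserted, not proved, and Step~3 collapses without it. Moreover it cannot be fudged: your verification gives $\tilde\kappa_A(x|_A;\cdot)\propto\lambda(x;\cdot)^{t}$ with $t=\binom{|A|}{k}\,w_{|A|}/\gamma_k$, where $\gamma_m$ is the level-$k$ weight attached to $\Psi_{B,A}$ with $|A|=m$ and $w_m=\sum_{j=k}^{m}\binom{m-k}{j-k}\gamma_j$ is the interval sum, so you need $t=1$ exactly, not merely up to a constant. And with the weights read off literally from the displayed definition of $\tilde K_k$, namely $\gamma_m=\sum_{l=0}^{m-k}(-1)^{m-l}/\binom{l+k}{k}$, the identity is false: for $k=1$ one has $\gamma_1=-1$, $\gamma_2=1/2$, $\gamma_3=-5/6$, so the interval sums for $|A|=1,2,3$ are $-1,-1/2,-5/6$, which are not proportional to $1,1/2,1/3$ (already $t\neq1$ at $|A|=3$). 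The identity you invoke does hold when each weight carries the multiplicity $\binom{m-k}{l}$, i.e.\ when the sum runs over subsets $R\subseteq A\setminus B$ rather than over $l$ alone; these are exactly the coefficients $\alpha_{A,C}$ produced by the paper's construction (for $k=1$ they are $1,-1/2,1/3,\dots$, and then the interval sums are $1/\binom{m}{1}$ as you want). So to complete your argument you must actually prove the interval-sum identity for the correct weights and reconcile them with the stated compatibility condition of $\tilde K_k$; as it stands, the crucial computation is missing and, taken against the definition as printed, the asserted formula for $\ln\tilde\kappa_A$ with $|A|>k$ does not hold. (A minor further slip: in the Step~3 display you drop the $\ln Z_D(x|_D)$ contributions of the $\Theta_D$; this is harmless since they are independent of $x_0$ and disappear upon normalization, but it should be said.)
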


\begin{proof}
Assume first that $\kappa_{A}$ is strictly positive.
Define Gibbs potentials using the Möbius inversion~\eqref{eq:Moebiuspotentials}.  Note that
\begin{eqnarray*}         
  \sum_{\substack{C \subseteq A \\ | C |  \geq k}} (-1)^{| A \setminus C |}   \ln \kappa_C (x_C ; x_0 )       
        & = &  \sum_{\substack{C \subseteq A \\ |C| \geq k}} (-1)^{| A \setminus C |} \,  \frac{1}{{ \binom{|C|}{k} }}
                   \sum_{\substack{B \subseteq C \\ | B | = k }} \ln \kappa_C (x_C ; x_0 )     \\
        & = &  \sum_{\substack{C \subseteq A \\ C \in {\mathcal A}}} (-1)^{| A \setminus C |} \,  \frac{1}{{ \binom{|C|}{k} } }
                   \sum_{\substack{B \subseteq C \\ | B | = k }} \ln \kappa_B (x_B ; x_0 )     \\        
        & = &  \sum_{\substack{B \subseteq A \\ |B| = k}} \left\{ \sum_{R \subseteq A \setminus B } 
                   (-1)^{| A | - |R| - k} \,  \frac{1}{{ \binom{|R| + k}{k} } } \right\} \,  \ln \kappa_B (x_B ; x_0 )
\end{eqnarray*}
Together with~\eqref{eq:Moebiuspotentials} this gives
\begin{equation*}
  \phi_A (x_A, x_0) 
  \; = \; \sum_{\substack{C \subseteq A \\ | C | \leq k}}  \alpha_{A,C} \,  \ln \kappa_C(x_C ; x_0 ) \, ,
\end{equation*}
where
\begin{equation*}
  \alpha_{A,C} =
  \begin{cases}
    (-1)^{|A| - |C|}, & \text{ if } |C|< k \\
    \sum_{R \subseteq A \setminus C } 
                   (-1)^{| A | - |R| - k} \,  \frac{1}{{ \binom{|R| + k}{k} } }, &  \text{ if } |C|= k
  \end{cases}
\end{equation*}
depends only on the cardinalities of $A$ and $C$.  The statement follows with the choice
$\Psi_{C,A}(x_{C};x_{0})=\alpha_{A,C}\ln\kappa_{C}(x_{C};x_{0})$.

If $(\kappa_{A})$ is not strictly positive, then define $\lambda_{A}(x_{A};x_{0}) = \frac{1}{d_{0}}$ for all
$A\subseteq[n]$.  Then the functional modalities $(\lambda_{A})$ are $\Rcal_{k}$-robust for all $x\in\Scal$, and so
are the strictly positive functional modalities $(\kappa^{\epsilon}_{A})$ defined via $\kappa^{\epsilon}_{A} =
(1-\epsilon)\kappa_{A} + \epsilon\lambda_{A}$.  The statement follows from $\lim_{\epsilon\to
  0}\kappa^{\epsilon}_{A}=\kappa_{A}$.
\end{proof}

\begin{ex}
  Consider the case of $n=2$ binary inputs, $\Xcal_{1}=\Xcal_{2}=\{0,1\}$, and let $\Scal=\{(0,0),(1,1)\}$.  Then
  $\Rcal_{1}$-robustness on $\Scal$ means
  \begin{equation*}
    \kappa_{\{1\}}(x_{1}; x_{0}) = \kappa_{\{1,2\}}(x_{1},x_{2}; x_{0}) = \kappa_{\{2\}}(x_{2}; x_{0})
  \end{equation*}
  for all $x_{0}$ whenever $x_{1}=x_{2}$.  By Proposition~\ref{prop:robkernelcond} this translates into the conditions
  \begin{equation}
    \label{eq:ex3:potcond}
    \phi_{\{1,2\}}(x_{1},x_{2}; x_{0}) + \phi_{\{1\}}(x_{1}; x_{0})
    = 0 =
    \phi_{\{1,2\}}(x_{1},x_{2}; x_{0}) + \phi_{\{2\}}(x_{2}; x_{0})
  \end{equation}
  for all $x_{0}$ whenever $x_{1}=x_{2}$
  for the potentials $(\phi_{A})$ defined via~\eqref{eq:Moebiuspotentials}.  This means: Assuming that
  $(\kappa_{A})$ is $\Rcal_{1}$-robust, it suffices to specify the four functions
  \begin{align*}
    \phi_{\emptyset}(x_{0}),
    \phi_{\{1\}}(x_{1}; x_{0}),
    \phi_{\{1,2\}}(0,1; x_{0}),
    \phi_{\{1,2\}}(1,0; x_{0}).
  \end{align*}
  The remaining potentials can be deduced from~\eqref{eq:ex3:potcond}.  If only the values of $(\kappa_{A})$ for $x\in\Scal$ are needed, then it suffices to specify
$    \phi_{\emptyset}(x_{0})
    \text{ and }
    \phi_{\{1\}}(x_{1}; x_{0})$.
\end{ex}

  Even though the families $K_{k}$ and $\tilde K_{k}$ do not depend on the set $\Scal$, the choice of the set $\Scal$
  is essential: If the set $\Scal$ is too large, then the conditions~\eqref{invar} imply that the output $X_{0}$ is
  (unconditionally) independent of all inputs.  The theory developed in Sections~\ref{sec:robustness-and-CI}
  to~\ref{sec:k-robustness} discusses the constraints on conditionals imposed by the choice of $\Scal$.
  In particular, Section~\ref{sec:robust_functions} gives bounds on the strength of the interaction between the input
  nodes and the output node for given $\Rcal$ and $\Scal$.

  On the other hand, since $K_{k}$ and $\tilde K_{k}$ are independent of $\Scal$,
  Proposition~\ref{prop:rob-kernel-k-interactions} shows that these two families can be used to construct robust
  systems, when the input distribution $\pin$ is not known a priori (or may change over time) but must be learned by
  the system.

\section{Robustness and conditional independence}
\label{sec:robustness-and-CI}

We now study robustness of the joint distribution $p$ of $(X_{0},X_{[n]})$.  As stated in the introduction,
$p$ is called \emph{robust against knockout of $S$} if it satisfies
$\CI{X_{0}}{X_{S}}[X_{R}]$, where $R=[n]\setminus S$.  By definition this means that
\begin{equation}
  \label{eq:elementary-CI}
  p(x_{0},x_{S},x_{R})p(x_{0}',x_{S}',x_{R}) = p(x_{0},x_{S}',x_{R})p(x_{0}',x_{S},x_{R}),
\end{equation}
for all $x_{0},x_{0}'\in\Xcal_{0}, x_{S},x_{S}'\in\Xcal_{S}$ and $x_{R}\in\Xcal_{R}$.  Here, $p(x_{0},x_{S},x_{R})$ is an
abbreviation of $p(X_{0}=x_{0},X_{S}=x_{S},X_{R}=x_{R})$.  It is not difficult to see that this definition is equivalent
to the usual definition of conditional independence~\cite{DrtonSturmfelsSullivant09:Algebraic_Statistics}.  This
algebraic formulation makes it possible to study conditional independence with algebraic tools.

In order to formulate the results in higher generality, we will also consider CI statements of the form
$\CI{X_{0}}{X_{S}}[X_{R}=y]$ for some $S\subseteq[n]$, $R=[n]\setminus S$ and $y\in\Xcal_{R}$.  By definition, this is
equivalent to equations~\eqref{eq:elementary-CI} for all $x_{0},x_{0}'\in\Xcal_{0}, x_{S},x_{S}'\in\Xcal_{S}$ and
$x_{R}=y$.  Such a statement models the case that, if the value of the input variables $\Xcal_{R}$ is~$y$, then the
system does not need to know the remaining variables $X_{S}$ in order to compute its output.
Such CI statements naturally generalize canalizing~\cite{Kauffman93:Origins_of_Order} or nested
canalizing functions~\cite{JarrahRaposaLaubenbacher07:Nested_Canalyzing_and_other_Functions}, which have been studied in the context of
robustness.
The simpler statement $\CI{X_{0}}{X_{S}}[X_{R}]$ corresponds to the special case where
$\CI{X_{0}}{X_{S}}[X_{R}=y]$ for all $y\in\Xcal_{R}$.

Let $\Rcal$ be a collection of pairs $(R,y)$, where $R\subseteq[n]$ and $y\in\Xcal_{R}$.  Such a
collection will be called a \emph{robustness specification} in the following.  A joint distribution is called
\emph{$\Rcal$-robust} if it satisfies all conditional independence (CI) statements
\begin{equation}
  \label{eq:CI-collection}
  \CI{X_{0}}{X_{[n]\setminus R}}[X_{R}=y]
\end{equation}
for all $(R,y)\in\Rcal$.
We denote $\Pcal_{\Rcal}$ the set of all $\Rcal$-robust probability distributions.

\begin{ex}
  As before, let $\Rcal_{k}$ be the set of subsets of $[n]$ of cardinality $k$ or greater.  In other words, a
  probability measure $p$ is $\Rcal_{k}$-robust, if we can knock out any $n-k$ input variables without losing information
  on the output.
\end{ex}

Equations~\eqref{eq:elementary-CI} are polynomial equations in the elementary probabilities.  They are related to the
\emph{binomial edge ideals} introduced in~\cite{HHHKR10:Binomial_Edge_Ideals}.  The generalized binomial edge ideals
will be studied in Section~\ref{sec:gen-binom-edge-ideals}.  Here, we interpret the algebraic results from the point of
view of robustness.

Let $\Xcal=\Xcal_{1}\times\dots\times\Xcal_{n}$.  A robustness specification $\Rcal$ induces a graph $G_{\Rcal}$ on
$\Xcal$, where $x,x'\in\Xcal$ are connected by an edge if and only if there exists $(R,y)\in\Rcal$ such
that the restrictions of $x$ and $x'$ to $R$ satisfy $x|_{R}=x'|_{R}=y$.
\begin{defi}
  \label{def:robustness-structure}
  Let $\Ycal\subseteq\Xcal$, and denote $G_{\Rcal,\Ycal}$ the subgraph of $G_{\Rcal}$ induced by $\Ycal$.  The set
  $\Ycal$ is called \emph{$\Rcal$-connected} if $G_{\Rcal,\Ycal}$ is connected.  The set of connected components of
  $G_{\Rcal,\Ycal}$ is called a \emph{$\Rcal$-robustness structure}.  An $\Rcal$-robustness structure $\Bbf$ is
  \emph{maximal} if and only if $\cup\Bbf:=\cup_{\Zcal\in\Bbf}\Zcal$ satisfies any of the following equivalent
  conditions:
  \begin{enumerate}
  \item For any $x\in\Xcal\setminus\cup\Bbf$ there are edges $(x,y)$, $(x,z)$ in $G_{\Rcal}$ such that
    $y,z\in\cup\Bbf$ are not connected in $G_{\Rcal,\cup\Bbf}$.
  \item For any $x\in\Xcal\setminus\cup\Bbf$ the induced subgraph $G_{\Rcal,\cup\Bbf\cup\{x\}}$ has less connected
    components than $G_{\Rcal,\cup\Bbf}$.
  \end{enumerate}
\end{defi}

For any probability distribution $p$ on $\Xcal$, $x_{0}\in\Xcal_{0}$ and $x\in\Xcal$ denote $\tilde p_{x}$ the vector
with components $\tilde p_{x}(x_{0}) = p(X_{0}=x_{0},X_{[n]}=x)$.  Denote $\supp\tilde p:=\{x\in\Xcal:\tilde p_{x}\neq
0\}$.  For any family $\Bbf$ of subsets of $\Xcal$ let $\Pcal_{\Bbf}$ be the set of probability distributions $p$ that
satisfy the following two conditions:
\begin{enumerate}
\item $\supp\tilde p = \cup\Bbf$,
\item $\tilde p_{x}$ and $\tilde p_{y}$ are proportional, whenever there exists $\Zcal\in\Bbf$ such that $x,y\in\Zcal$.
\end{enumerate}
It follows from~\eqref{eq:decomposition-VG} and Theorem~\ref{thm:primary-decomposition} that $\Pcal_{\Rcal}$ equals the
disjoint union $\cup_{\Bbf}\Pcal_{\Bbf}$, where the union is over all 
$\Rcal$-robustness structures.  Alternatively, $\Pcal_{\Rcal}$ equals the union $\cup_{\Bbf}\ol{\Pcal_{\Bbf}}$, where
the union is over all maximal $\Rcal$-robustness structures.

For any $x\in\Xcal$ the vector $\tilde p_{x}$ is proportional to the conditional probability distribution
$P(\cdot|X_{[n]}=x)$ of $X_{0}$ given that $X_{[n]}=x$.  Hence:
\begin{lemma}
  \label{lem:Crobustcrit}
  Let $p$ be a probability distribution, and let $\Bbf$ be the set of connected components of $G_{\Rcal,\supp\tilde p}$.
  Then $p$ is $\Rcal$-robust if and only if $P(\cdot|X_{[n]}=x)=P(\cdot|X_{[n]}=y)$ whenever there exists $\Zcal\in\Bbf$
  such that $x,y\in\Zcal$.
\end{lemma}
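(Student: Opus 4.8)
The plan is to unwind the definitions and show that the two displayed conditions in the definition of $\Rcal$-robustness collapse to the stated equality of conditional distributions once we know the support. First I would recall the observation made in the excerpt just before the lemma: for each $x\in\Xcal$, the vector $\tilde p_{x}$ has components $\tilde p_{x}(x_{0})=p(X_{0}=x_{0},X_{[n]}=x)$, and hence $\tilde p_{x}$ is either zero (if $x\notin\supp\tilde p$) or a positive multiple of the conditional distribution $P(\cdot\,|\,X_{[n]}=x)$, the scalar being $p(X_{[n]}=x)$. So "$\tilde p_{x}$ and $\tilde p_{y}$ are proportional" is, for $x,y\in\supp\tilde p$, exactly the same as "$P(\cdot\,|\,X_{[n]}=x)=P(\cdot\,|\,X_{[n]}=y)$"; and for points outside the support there is nothing to say, since a zero vector is proportional to everything. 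This makes the translation between condition (2) of $\Pcal_{\Bbf}$ and the conclusion of the lemma essentially tautological.

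Next I would handle the key reduction: that $\Rcal$-robustness of $p$ is equivalent to the pair of conditions defining $\Pcal_{\Bbf}$ when $\Bbf$ is taken to be the connected components of $G_{\Rcal,\supp\tilde p}$ — but with condition (1) automatically satisfied by that choice of $\Bbf$, so only condition (2) remains to be checked. Concretely, I would argue directly from the elementary CI equations~\eqref{eq:elementary-CI}: fix $(R,y)\in\Rcal$ and $S=[n]\setminus R$. The equation $p(x_{0},x_{S},y)p(x_{0}',x_{S}',y)=p(x_{0},x_{S}',y)p(x_{0}',x_{S},y)$ for all $x_{0},x_{0}',x_{S},x_{S}'$ says precisely that the vectors $\{\tilde p_{x}:x\in\Xcal,\ x|_{R}=y\}$ are pairwise proportional. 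By the definition of the graph $G_{\Rcal}$, two points $x,x'$ with $x|_{R}=x'|_{R}=y$ are joined by an edge; conversely every edge of $G_{\Rcal}$ arises this way from some $(R,y)\in\Rcal$. So $p$ is $\Rcal$-robust if and only if $\tilde p_{x}$ and $\tilde p_{x'}$ are proportional for every edge $(x,x')$ of $G_{\Rcal}$. Restricting to $\supp\tilde p$ (outside it the proportionality is free), and using that proportionality of nonzero vectors is an equivalence relation, "proportional along every edge" propagates along paths and is therefore equivalent to "proportional on every connected component of $G_{\Rcal,\supp\tilde p}$", i.e.\ to "proportional whenever $x,y$ lie in a common $\Zcal\in\Bbf$". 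Combined with the first paragraph, this is exactly the asserted equivalence.

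The only mild subtlety — and the step I would be most careful with — is the transitivity/propagation argument: proportionality of vectors is only transitive among \emph{nonzero} vectors, so I need the restriction to $\supp\tilde p$ to be in place before chaining along a path, and I should note that within a connected component of $G_{\Rcal,\supp\tilde p}$ every vertex carries a nonzero vector by construction, so no zero vector ever interrupts a chain. There is also a trivial edge case, namely edges of $G_{\Rcal}$ incident to a vertex outside $\supp\tilde p$: such an edge imposes no constraint because one endpoint's vector is $0$, which is consistent with any proportionality requirement, and it is also absent from the induced subgraph $G_{\Rcal,\supp\tilde p}$, so dropping it changes nothing. Once these points are recorded, the proof is a short bookkeeping argument with no real computation, so I would keep it to a few lines.
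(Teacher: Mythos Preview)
Your proposal is correct and matches the paper's reasoning. The paper gives no formal proof of this lemma --- it simply writes ``Hence:'' after observing that $\tilde p_{x}$ is proportional to $P(\cdot\,|\,X_{[n]}=x)$ and after invoking the decomposition~\eqref{eq:decomposition-VG} from Section~\ref{sec:gen-binom-edge-ideals}; your argument spells out precisely that implicit chain (CI equations $\Leftrightarrow$ proportionality along edges of $G_{\Rcal}$ $\Leftrightarrow$ proportionality on connected components of $G_{\Rcal,\supp\tilde p}$, with the transitivity caveat for nonzero vectors), so it is the same approach made explicit and self-contained.
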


The following lemma sheds light on the structure of $\Pcal_{\Bbf}$:
\begin{lemma}
  \label{lem:genconst}
  Fix an $\Rcal$-robustness structure $\mathbf{B}$.  Then $\Pcal_{\Bbf}$ consists of all probability measures of
  the form
  \begin{equation}
    \label{eq:genconst}
    p_{x_{0}x} =
    \begin{cases}
      \mu(\Zcal)\lambda_{\Zcal}(x) p_{\Zcal}(x_{0}) & \text{if }x\in \Zcal\in\Bbf,
      \\
      0                       & \text{ if }x\in\Xcal\setminus\cup\Bbf,
    \end{cases}
  \end{equation}
  where $\mu$ is a probability distribution on $\Bbf$ and $\lambda_{\Zcal}$ is a probability distribution on $\Zcal$ for
  each $\Zcal\in\Bbf$ and $(p_{\Zcal})_{\Zcal\in\Bbf}$ is a family of probability distributions on $\Xcal_{0}$.
\end{lemma}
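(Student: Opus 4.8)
The plan is to prove the two inclusions separately; since the lemma is essentially a bookkeeping statement, both directions are direct verifications and I do not anticipate a genuine obstacle.

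First I would show that every $p$ of the form~\eqref{eq:genconst} lies in $\Pcal_{\Bbf}$, where it should be understood that $\mu$ and each $\lambda_{\Zcal}$ have full support (this is forced by condition~(1) in the definition of $\Pcal_{\Bbf}$). Nonnegativity is clear, and summing over $x_{0}\in\Xcal_{0}$ and $x\in\Xcal$ gives $\sum_{\Zcal\in\Bbf}\mu(\Zcal)\sum_{x\in\Zcal}\lambda_{\Zcal}(x)\sum_{x_{0}}p_{\Zcal}(x_{0})=\sum_{\Zcal\in\Bbf}\mu(\Zcal)=1$, so $p$ is a probability distribution. For $x\in\Zcal$ we have $\tilde p_{x}=\mu(\Zcal)\lambda_{\Zcal}(x)\,p_{\Zcal}$ as a vector in $\Rb^{\Xcal_{0}}$; this vanishes exactly when $\mu(\Zcal)\lambda_{\Zcal}(x)=0$, which by full support never occurs, so $\supp\tilde p=\cup\Bbf$, giving condition~(1). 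Condition~(2) is immediate, since for $x,y\in\Zcal$ both $\tilde p_{x}$ and $\tilde p_{y}$ are scalar multiples of $p_{\Zcal}$.

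For the reverse inclusion I would start from an arbitrary $p\in\Pcal_{\Bbf}$ and reconstruct the data $\mu,(\lambda_{\Zcal}),(p_{\Zcal})$. For $x\in\cup\Bbf$ set $c_{x}:=\sum_{x_{0}}\tilde p_{x}(x_{0})=p(X_{[n]}=x)$, which is strictly positive by condition~(1). For each $\Zcal\in\Bbf$ put $\mu(\Zcal):=\sum_{x\in\Zcal}c_{x}$, $\lambda_{\Zcal}(x):=c_{x}/\mu(\Zcal)$, and $p_{\Zcal}(x_{0}):=\tilde p_{x}(x_{0})/c_{x}$ for any $x\in\Zcal$; by condition~(2) the vectors $\tilde p_{x}$, $x\in\Zcal$, are pairwise proportional and nonzero, so $p_{\Zcal}$ is independent of the chosen $x$. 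One then checks mechanically that $p_{\Zcal}$ is a probability distribution on $\Xcal_{0}$, that $\lambda_{\Zcal}$ is a probability distribution on $\Zcal$, and---using that the marginal of $p$ on $\Xcal$ is supported on $\cup\Bbf$ by condition~(1)---that $\sum_{\Zcal}\mu(\Zcal)=\sum_{x\in\cup\Bbf}c_{x}=1$, so $\mu$ is a probability distribution on $\Bbf$. Finally, for $x\in\Zcal$ one has $\mu(\Zcal)\lambda_{\Zcal}(x)p_{\Zcal}(x_{0})=c_{x}\cdot\tilde p_{x}(x_{0})/c_{x}=p_{x_{0}x}$, and for $x\notin\cup\Bbf$ we have $p_{x_{0}x}=\tilde p_{x}(x_{0})=0$, so $p$ has the desired form.

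The only point requiring attention is the double use of condition~(1): it guarantees $c_{x}>0$ for every $x\in\cup\Bbf$, so that the normalizations defining $p_{\Zcal}$ and $\lambda_{\Zcal}$ are legitimate and automatically have full support, and it guarantees that the $\Xcal$-marginal of $p$ places no mass outside $\cup\Bbf$, so that $\mu$ is normalized. Everything else is routine.
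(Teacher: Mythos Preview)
Your proposal is correct and follows essentially the same approach as the paper: both directions rely on the factorization $p(x_{0},x)=p(\Zcal)\,p(x\mid X_{[n]}\in\Zcal)\,p(x_{0}\mid x)$ for $x\in\Zcal$. The paper is much terser---it appeals to Lemma~\ref{lem:Crobustcrit} for the forward direction and only writes down the factorization for the reverse---whereas you verify conditions~(1) and~(2) directly and spell out the normalizations; your observation that $\mu$ and the $\lambda_{\Zcal}$ must have full support is a useful clarification that the paper leaves implicit.
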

\begin{proof}
  It is easy to see that $p$ is indeed a probability distribution.  By Lemma \ref{lem:Crobustcrit} it belongs to
  $\Pcal_{\Bbf}$.  In the other direction, any probability measure 
  can be written as a product
  \begin{equation*}
    p(x_{0},x_{1},\dots,x_{n}) = p(\Zcal) p\left(x_{1},\dots,x_{n}\middle|(X_{1},\dots,X_{n})\in \Zcal\right) p(x_{0}|x_{1},\dots,x_{n}),
  \end{equation*}
  if $(x_{1},\dots,x_{n})\in \Zcal\in\mathbf{B}$, and if $p$ is an $\Rcal$-robust probability distribution, then
  $p_{\Zcal}(x_{0}):=p(x_{0}|x_{1},\dots,x_{n})$ depends only on the block $\Zcal$ in which $(x_{1},\dots,x_{n})$ lies.
\end{proof}

\section{Robust functions}
\label{sec:robust_functions}

The factorization in Lemma~\ref{lem:genconst} admits the following interpretation:
\begin{prop}
  Let $\mathbf{B}$ be an $\Rcal$-robustness structure.  Then the set $\overline{\mathcal{R}}_{B}$ is the set of probability
  distributions such that
  \begin{equation*}
    p\big(X_{[n]}\in\cup\Bbf\big) = 1
  \end{equation*}
  and
  \begin{equation*}
    \CI{X_{0}}{X_{[n]}}[X_{[n]}\in\Zcal]
    \qquad\text{ for all }\Zcal\in\mathbf{B}.
  \end{equation*}
\end{prop}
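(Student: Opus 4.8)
The plan is to recognize that the claimed description of $\overline{\mathcal{R}}_B$ (which should read $\overline{\Pcal_{\Bbf}}$, the closure of $\Pcal_{\Bbf}$) is just a reformulation of the parametrization in Lemma~\ref{lem:genconst}, so the work is translating between the two languages. First I would observe that the two displayed conditions are closed conditions on the simplex of probability distributions on $\tilde\Xcal$: the condition $p(X_{[n]}\in\cup\Bbf)=1$ is the intersection with a coordinate subspace, and each statement $\CI{X_{0}}{X_{[n]}}[X_{[n]}\in\Zcal]$ is defined by the vanishing of the $2\times 2$ minors $p_{x_{0}x}p_{x_{0}'x'} - p_{x_{0}x'}p_{x_{0}'x} = 0$ for $x,x'\in\Zcal$, which is again a closed (indeed Zariski-closed) condition. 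So the set $\Pcal'$ described in the proposition is closed, and it suffices to show $\Pcal' = \overline{\Pcal_{\Bbf}}$.

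For the inclusion $\Pcal_{\Bbf}\subseteq\Pcal'$: by Lemma~\ref{lem:genconst} any $p\in\Pcal_{\Bbf}$ has the form~\eqref{eq:genconst}, from which $p(X_{[n]}\in\cup\Bbf)=\sum_{\Zcal}\mu(\Zcal)\sum_{x\in\Zcal}\lambda_{\Zcal}(x)=1$ is immediate, and for fixed $\Zcal$ and $x,x'\in\Zcal$ the vectors $\tilde p_{x}=\mu(\Zcal)\lambda_{\Zcal}(x)p_{\Zcal}(\cdot)$ and $\tilde p_{x'}=\mu(\Zcal)\lambda_{\Zcal}(x')p_{\Zcal}(\cdot)$ are proportional (both are scalar multiples of $p_{\Zcal}$), which is exactly $\CI{X_{0}}{X_{[n]}}[X_{[n]}\in\Zcal]$ in the elementary-minor form~\eqref{eq:elementary-CI}. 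Since $\Pcal'$ is closed this gives $\overline{\Pcal_{\Bbf}}\subseteq\Pcal'$.

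For the reverse inclusion $\Pcal'\subseteq\overline{\Pcal_{\Bbf}}$: take $p\in\Pcal'$. The support condition forces $\tilde p_{x}=0$ for $x\notin\cup\Bbf$, and for each block $\Zcal$ the CI condition says that on $\Zcal$ all nonzero vectors $\tilde p_{x}$ are mutually proportional, hence there is a probability distribution $p_{\Zcal}$ on $\Xcal_{0}$ and nonnegative weights $c_{x}$ ($x\in\Zcal$) with $\tilde p_{x}=c_{x}\,p_{\Zcal}(\cdot)$; setting $\mu(\Zcal)=\sum_{x\in\Zcal}c_{x}$ and $\lambda_{\Zcal}(x)=c_{x}/\mu(\Zcal)$ (when $\mu(\Zcal)>0$) recovers the form~\eqref{eq:genconst}. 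The only obstruction to concluding $p\in\Pcal_{\Bbf}$ outright is that $\Pcal_{\Bbf}$ additionally demands $\supp\tilde p=\cup\Bbf$ exactly — i.e. $\mu(\Zcal)>0$ for all $\Zcal$ and $\lambda_{\Zcal}$ fully supported on $\Zcal$ — which may fail; but any such $p$ is a limit of elements of $\Pcal_{\Bbf}$ obtained by perturbing the $c_{x}$ to be strictly positive (and renormalizing), staying inside the parametrized family at every step. Hence $p\in\overline{\Pcal_{\Bbf}}$.

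\textbf{Main obstacle.} The substantive point is the bookkeeping in the last paragraph: one must choose the perturbation so that the support grows to \emph{exactly} $\cup\Bbf$ and not more — i.e. the perturbed distribution still vanishes off $\cup\Bbf$ and still has proportional $\tilde p_{x}$ within each block — and check that $\overline{\Pcal_{\Bbf}}$ is indeed what the earlier sections call $\overline{\mathcal R}_{B}$. Both are routine once the parametrization~\eqref{eq:genconst} is in hand, so the proposition is essentially a restatement of Lemma~\ref{lem:genconst} together with the elementary observation that conditional independence given an event is defined by $2\times2$ minors.
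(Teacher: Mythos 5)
Your proposal is correct and matches what the paper intends: the paper states this proposition without proof, as an immediate interpretation of the parametrization in Lemma~\ref{lem:genconst}, and your argument simply spells out that reading (including the correct identification of $\overline{\mathcal{R}}_{B}$ as $\overline{\Pcal_{\Bbf}}$ and the routine closure/perturbation step). Nothing in your write-up deviates from or adds beyond what the paper's omitted verification would be.
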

In other words, the sets $\Zcal\in\mathbf{B}$ determine a partition of the set $\supp\tilde p$, which consists of all
outcomes of $X_{[n]}$ with non-zero probability under $p$.  Within each block $\Zcal$ the value of $X_{0}$ is
independent of all inputs.
Let $R\subseteq[n]$, and let $x,x'\in\Xcal_{[n]}$ satisfying $(R,x|_{R})\in\Rcal$ and
$(R,x'|_{R})\in\Rcal$.  If $x$ and $x'$ belong to different blocks in $\Bbf$, then $x|_{R}\neq x'|_{R}$.
Therefore, the knowledge of the input variables in $R$ is sufficient to determine in which block
$\Zcal\in\mathbf{B}$ we are.

When $p$ or $\Bbf$ is fixed we can introduce an additional random variable $B$ that takes values in $\mathbf{B}$.  The
situation is illustrated by the following graph:
\begin{equation*}
      \xymatrix{
         && X_{0} && \\
         && B\ar[u] && \\
        X_{1}\ar[urr] & X_{2}\ar[ur] & X_{3}\ar[u] & \cdots & X_{3}\ar[ull]\\
     }
\end{equation*}
The arrows from the input variables $X_{1},\dots,X_{n}$ to $B$ are, in fact, deterministic:
\begin{equation*}
  B(x) = \Zcal \qquad\text{ if }x\in \Zcal\in\Bbf.
\end{equation*}
Note, however, that the function $B$ is only defined uniquely on $\cup\Bbf$, which is a set of measure one with respect
to $p$.  This means that in many cases it is enough to study robustness of functions on $\Xcal$.

\begin{defi}
  A function $f$ 
  defined on a subset $\Scal\subseteq\Xcal_{[n]}$ is $\Rcal$-robust if there exists an $\Rcal$-robustness
  structure $\mathbf{B}$ such that $\Scal=\cup\mathbf{B}$ and $f$ is constant on each $B\in\mathbf{B}$.
\end{defi}
There are two motivations for looking at this kind of functions: First, they occur in the special case of
$\Rcal$-robust probability distributions $p(X_{0},X_{1},\dots,X_{n})$ such that all conditional probability
distributions $p(X_{0}|x_{1},\dots,x_{n})$ are Dirac measure.  Second, as motivated above, we can associate to any
$\Rcal$-robust probability distribution $p$ a corresponding function $f$ characterizing the
$\Rcal$-robustness structure.  In order to reconstruct $p$ it is enough to specify the input distribution
$\pin(X_{1},\dots,X_{n})$ and a set of output distributions
$\left\{p(X_{0}|(X_{1},\dots,X_{n})\in\Zcal)\right\}_{\Zcal\in\mathbf{B}}$ in addition to the function
\mbox{$f:\Scal\to\mathbf{B}$}.
Note that natural examples of robust functions arise from the study of canalizing
functions~\cite{Kauffman93:Origins_of_Order,JarrahRaposaLaubenbacher07:Nested_Canalyzing_and_other_Functions}.

It is natural to ask the following question:
Given a certain robustness structure, how much freedom is left to choose a robust function $f$?  More precisely, how
large can the image of $f$ be?  Equivalently, how many components can an $\Rcal$-robustness structure $\Bbf$ have?

\begin{lemma}
  Let $f$ be an $\Rcal$-robust function.  The cardinality of the image of $f$ is bounded from above by
  \begin{equation*}
    \min\left\{\prod_{i\in R}d_{i}: (R,y)\in \Rcal\text{ for all }y\in \Xcal_{R} \right\}.
  \end{equation*}
\end{lemma}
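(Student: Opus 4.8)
The plan is to bound the cardinality of $\im f$ by the number of blocks of an $\Rcal$-robustness structure realizing $f$, and then to bound that number using the observation made in the paragraph preceding the lemma: the inputs in a set $R$ with $(R,x|_{R})\in\Rcal$ already determine which block a point $x$ lies in.

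Concretely, I would first fix a set $R\subseteq[n]$ with $(R,y)\in\Rcal$ for all $y\in\Xcal_{R}$; if no such $R$ exists the stated minimum is over the empty set and the inequality is vacuous, so assume one does. Let $\Bbf$ be an $\Rcal$-robustness structure with $\Scal=\cup\Bbf$ on which $f$ is constant, as guaranteed by the definition of an $\Rcal$-robust function; since $\Xcal$ is finite, $\Bbf$ is a finite family. Next I would show that assigning to each block $B\in\Bbf$ the $R$-restriction $x_{B}|_{R}$ of a representative $x_{B}\in B$ is well defined and injective as a map $\Bbf\to\Xcal_{R}$. Indeed, if $B\neq B'$ were distinct blocks with $x_{B}|_{R}=x_{B'}|_{R}=:y$, then, because $(R,y)\in\Rcal$, the points $x_{B}$ and $x_{B'}$ would be joined by an edge of $G_{\Rcal}$; since both endpoints lie in $\cup\Bbf$, this edge belongs to the induced subgraph $G_{\Rcal,\cup\Bbf}$, forcing $x_{B}$ and $x_{B'}$ into the same connected component, a contradiction. (This is precisely the assertion, already recorded before the lemma, that knowledge of the variables in $R$ determines the block; in particular $x_{B}|_{R}$ is independent of the choice of representative.)

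Finally, injectivity gives $|\Bbf|\le|\Xcal_{R}|=\prod_{i\in R}d_{i}$, and since $f$ is constant on each block of $\Bbf$ whose union is all of $\Scal$, we get $|\im f|\le|\Bbf|\le\prod_{i\in R}d_{i}$. Taking the minimum over all admissible $R$ yields the claimed bound. I do not expect a genuine obstacle here: the entire substance is the already-noted fact that $X_{R}$ pins down the block, and the remainder is a one-line counting argument; the only points requiring minor care are the finiteness of $\Bbf$ (hence of $\im f$) and the degenerate case in which the index set of the minimum is empty.
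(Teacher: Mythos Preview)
Your argument is essentially the same as the paper's (which is just a one-sentence appeal to the fact that after knocking out $[n]\setminus R$ only $\prod_{i\in R}d_i$ input states remain), and the counting step is fine. There is, however, a small slip: the map $B\mapsto x_B|_R$ is \emph{not} in general independent of the chosen representative. For instance, with $n=2$, binary inputs, and $\Rcal=\Rcal_1$, the whole of $\Xcal$ is a single block, yet its elements have both possible restrictions to $R=\{1\}$. What is true (and what your injectivity argument actually proves) is the contrapositive direction only: if $x|_R=x'|_R$ then $x$ and $x'$ lie in the same block. That suffices: fix once and for all one representative $x_B$ per block, and then $B\mapsto x_B|_R$ is trivially well defined and your argument shows it is injective, giving $|\Bbf|\le|\Xcal_R|$. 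So drop the parenthetical claim that ``$x_B|_R$ is independent of the choice of representative'' and the proof goes through.
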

\begin{proof}
  Suppose without loss of generality that $(\{1,\dots,r\},y)\in\Rcal$ for all $y\in\Xcal_{[r]}$ and that $d_{1}\dots
  d_{r}$ equals the above minimum.  
  The image of $f$ cannot be larger than $d_{1}\dots d_{r}$, since if we knock out all $X_{i}$ for $i>r$, then we can
  only determine $d_{1}\dots d_{r}$ states.
\end{proof}

\begin{ex}
  Suppose that $\Scal=\Xcal$.  This means that the $\Rcal$-robustness structure satisfies $\cup\Bbf = \Xcal$.  We first
  consider the case that $G_{\Rcal}$ is connected.  This is fulfilled, for example, if for any $k\in[n]$ there exists
  $R\subseteq[n]$ such that $k\notin S$ and $(R,y)\in\Rcal$ for all $y\in\Xcal_{R}$.  In this case an $\Rcal$-robust
  function $f$ takes only one value.

  Assume that $(R,y)\in\Rcal$ implies $(R,y')\in\Rcal$ for all $y'\in\Xcal_{R}$.  If $G_{\Rcal}$ is not connected,
  then
  some input variables may never be knocked out.  Let $T$ be the set of these input variables.  For every fixed value of
  $X_{T}$
  the function $f$ must be constant.  This means that $f$ can have
  $\prod_{i\in[n]\setminus T}d_{i}$ different values.
\end{ex}

\begin{rem}[Relation to coding theory]
  We can interpret $\Xcal$ as a set of words over the alphabet $[d_{m}]$ of length $n$, where $d_{m}=\max\{d_{i}\}$.
  For simplicity 
  assume that all $d_{i}$ are equal.
  Consider the uniform case $\Rcal=\Rcal_{k}$.  Then the task is to find a collection of subsets such that
  any two different subsets have Hamming distance at least $k$.  A related problem appears in coding theory: A code is a
  subset $\Ycal$ of $\Xcal$ and corresponds to the case that each element of $\Bbf$ is a singleton.  If
  distinct elements of the code have Hamming distance at least $n-k$, then a message can be reliably decoded even if only
  $k$ letters are transmitted correctly.
\end{rem}

\section{$\Rcal_{k}$-robustness}
\label{sec:k-robustness}

In this section we consider the symmetric case~$\Rcal=\Rcal_{k}$.
We fix $n$ and replace any prefix or subscript $\Rcal$ by $k$.

Let $k=0$.  Any pair $(x,y)$ is an edge in $G_{0}$.  This means that $\mathbf{B}$ can contain only one set $B$.  There
is only one maximal $0$-robustness structure, namely $\overline{\mathbf{B}}=\{\Xcal_{[n]}\}$.  The set $\mathcal{R}_{0}$
is irreducible.  This corresponds to the fact that $\Pcal_{n}$ is defined by $\CI{X_{0}}{X_{[n]}}$.

$\overline{\mathbf{B}}$ is actually a maximal $k$-robustness structure for any $0\le k\le n$.  This illustrates the fact
that the single CI statement $\CI{X_{0}}{X_{[n]}}$ implies all other CI statements of the
form~\eqref{eq:CI-collection}.  The corresponding set $\Pcal_{\overline{\mathbf{B}}}$ contains all probability
distributions of $\Pcal_{0}$ of full support.

Now let $k=1$.  In the case $n=2$, we obtain results by Alexander Fink, which can be reformulated as
follows~\cite{Fink11:Binomial_ideal_of_intersection_axiom}:
%
\emph{  Let $n=2$.  A $1$-robustness structure $\mathbf{B}$ is maximal if and only if the following statements hold:
  \begin{itemize}
  \item Each $B\in\mathbf{B}$ is of the form $B=S_{1}\times S_{2}$, where $S_{1}\subseteq \Xcal_{1}, S_{2}\subseteq\Xcal_{2}$.
  \item For every $x_{1}\in\Xcal_{1}$ there exists $B\in\mathbf{B}$ and $x_{2}\in\Xcal_{2}$ such that $(x_{1},x_{2})\in B$,
    and conversely.
  \end{itemize}
}

In~\cite{Fink11:Binomial_ideal_of_intersection_axiom} a different description is given: The block $S_{1}\times S_{2}$ can
be identified with the complete bipartite graph on $S_{1}$ and $S_{2}$.  In this way, every maximal $1$-robustness
structure corresponds to a collection of complete bipartite subgraphs with vertices in $\Xcal_{1}\cup\Xcal_{2}$ such that
every vertex in $\Xcal_{1}$ resp.~$\Xcal_{2}$ is part of one such subgraph.


This result generalizes in the following way:
\begin{lemma}
  A $1$-robustness structure $\mathbf{B}$ is maximal if and only if the following statements hold:
  \begin{itemize}
  \item Each $B\in\mathbf{B}$ is of the form $B=S_{1}\times\dots\times S_{n}$, where $S_{i}\subseteq\Xcal_{i}$.
  \item Fix $j\in[n]$ and $x_{i}\in\Xcal_{i}$ for all $i\in[n]$, $i\neq j$.  Then there exist $x_{j}\in\Xcal_{j}$ such that
    $(x_{1},\dots,x_{n})\in\cup_{B\in\mathbf{B}}B$.  In other words, whenever $n-1$ components of $(x_{1},\dots,x_{n})$
    are prescribed, there exist an $n$-th component such that $(x_{1},\dots,x_{n})\in\cup_{B\in\mathbf{B}}B$.
  \end{itemize}
\end{lemma}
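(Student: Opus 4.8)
The plan is to prove both directions of the equivalence, with the ``combinatorial box'' shape of the blocks as the technical core; throughout I would assume $n\ge 2$, since for $n=1$ the graph $G_{\Rcal_{1}}$ has no edges at all and the statement degenerates. Two elementary facts about $G_{\Rcal_{1}}$ get used repeatedly. First, an edge joins $x,x'$ precisely when they agree in at least one coordinate: enlarging the set $R$ beyond a singleton produces no new edges. Second, since $\mathbf{B}$ is by hypothesis the set of connected components of $G_{\Rcal_{1},\cup\mathbf{B}}$, distinct blocks carry no edge between them, which for $\Rcal_{1}$ means that for each coordinate $i$ the projections $S_{i}^{B}:=\{x_{i}:x\in B\}$, $B\in\mathbf{B}$, are pairwise disjoint subsets of $\Xcal_{i}$.

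For the implication ``$\mathbf{B}$ maximal $\Rightarrow$ the two conditions'', the heart is to show that each block $B$ is the full product $S_{1}^{B}\times\dots\times S_{n}^{B}$. I would first prove closure under single-coordinate swaps: given $x,x'\in B$ and a coordinate $i$, let $z$ be the point that agrees with $x$ in coordinate $i$ and with $x'$ in all other coordinates. If $z\in\cup\mathbf{B}$, then $z$ agrees with $x'\in B$ in the $n-1\ge 1$ coordinates different from $i$, so $z$ lies in the component of $x'$, i.e.\ $z\in B$. If $z\notin\cup\mathbf{B}$, then any edge from $z$ into $\cup\mathbf{B}$ arises from agreement in some coordinate $k$: if $k=i$ the neighbour agrees with $x$ in coordinate $i$ and hence lies in the component of $x$, namely $B$; if $k\neq i$ it agrees with $x'$ in coordinate $k$ and hence again lies in $B$. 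So $z$ would be adjacent only to the single block $B$, contradicting maximality condition~(1). Thus $z\in B$ in every case, and a short induction over the coordinates promotes swap-closure to $B=\prod_{i}S_{i}^{B}$, which is the first bullet. Granting the box shape, the second bullet becomes a surjectivity statement for the projections: maximality forces $\bigcup_{B\in\mathbf{B}}S_{i}^{B}=\Xcal_{i}$ for every $i$, because if some $v^{*}\in\Xcal_{i}$ lay in no $S_{i}^{B}$ one could take the point with $i$-th coordinate $v^{*}$ and remaining coordinates inside a single block $B$, obtaining a point outside $\cup\mathbf{B}$ that is adjacent only to $B$, again contradicting~(1). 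For $n=2$ this is exactly ``every vertex of $\Xcal_{1}$, resp.\ $\Xcal_{2}$, lies in some block'', i.e.\ Fink's criterion.

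For the converse I would assume the two conditions and verify maximality condition~(1) directly. Let $x\in\Xcal\setminus\cup\mathbf{B}$. By the projection-surjectivity contained in the second bullet, each coordinate value $x_{i}$ lies in $S_{i}^{B(i)}$ for a unique block $B(i)$. If all $B(i)$ were equal to one block $B$, then $x\in\prod_{i}S_{i}^{B}=B$ by the box property, contradicting $x\notin\cup\mathbf{B}$; hence $B(i)\neq B(i')$ for some pair $i\neq i'$. Agreement of $x$ with a point of $B(i)$ in coordinate $i$, and with a point of $B(i')$ in coordinate $i'$, exhibits edges from $x$ to two distinct blocks, so $G_{\Rcal_{1},\cup\mathbf{B}\cup\{x\}}$ has strictly fewer components than $G_{\Rcal_{1},\cup\mathbf{B}}$. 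As $x$ was arbitrary, $\mathbf{B}$ is maximal.

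The step I expect to be the main obstacle is the case $z\notin\cup\mathbf{B}$ in the box argument: one has to be sure that \emph{every} edge leaving $z$ toward $\cup\mathbf{B}$ is caught by agreement with $x$ in coordinate $i$ or with $x'$ in some other coordinate, and this is precisely where the simple description of the $\Rcal_{1}$-edge relation is essential (and where overlooking the degenerate case $n=1$ would break the argument). A secondary, routine point to spell out once is the induction turning swap-closure into ``product of its projections''. Finally I would check carefully that for $n=2$ the surjectivity condition extracted above matches the displayed second bullet, so that the lemma does generalise the result of Fink recalled before it.
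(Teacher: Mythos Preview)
Your argument for the product structure is a valid alternative to the paper's. The paper replaces a block $B$ wholesale by the box $B'=\prod_i S_i^B$, verifies that $(\mathbf{B}\setminus\{B\})\cup\{B'\}$ is again a $1$-robustness structure (connected, and not connected to any other block), and concludes $B=B'$ from maximality; your swap-closure argument reaches the same end one coordinate at a time and makes the use of maximality condition~(1) more explicit. Your converse is in fact more complete than the paper's, which only exhibits a single edge from $x$ into $\cup\mathbf{B}$ and then asserts maximality; by the definition of maximality one needs edges into two distinct blocks, and your step ``not all $B(i)$ coincide, else $x\in\prod_i S_i^{B}$'' is exactly what is required.

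There is, however, a genuine gap which you share with the paper's own proof. You assert that, given the box shape, ``the second bullet becomes a surjectivity statement for the projections'', i.e.\ $\bigcup_B S_i^B=\Xcal_i$ for every~$i$. This equivalence holds only for $n=2$. For $n\ge 3$ the literal second bullet is strictly stronger than projection surjectivity and in fact \emph{fails} for nontrivial maximal structures: with $\Xcal_1=\Xcal_2=\Xcal_3=\{0,1\}$ the structure $\mathbf{B}=\{\{(0,0,0)\},\{(1,1,1)\}\}$ is maximal (every other point shares a $0$-coordinate with the first block and a $1$-coordinate with the second), each block is a product of its projections, and the projections cover each $\Xcal_i$, yet the second bullet is violated for $j=3$, $x_1=0$, $x_2=1$. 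So the forward implication of the lemma, read literally, is false for $n\ge 3$; what you (and the paper) actually establish is the characterisation with the second bullet replaced by $\bigcup_B S_i^B=\Xcal_i$ for all~$i$. Your closing remark that you would ``check carefully that for $n=2$ the surjectivity condition matches the second bullet'' is precisely the right instinct---that check fails for larger~$n$.
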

\begin{proof}
  We say that a subset $\Ycal$ of $\Xcal$ is connected if $G_{\Rcal,\Ycal}$ is connected.  Suppose
  that $\mathbf{B}$ is maximal. %
  Let $B\in\mathbf{B}$ and let $S_{i}$ be the projection of $B\subset\Xcal_{[n]}$ to $\Xcal_{i}$.  Let
  $B'=S_{1}\times\dots\times S_{n}$.  Then $B\subseteq B'$.  We claim that $(\mathbf{B}\setminus\{B\})\cup\{B'\}$ is
  another coarser $1$-robustness structure.  By Definition~\ref{def:robustness-structure} we need to show that $B'$
  is connected and that $A\cup B'$ is not connected for all $A\in\mathbf{B}\setminus\{B\}$.  The first condition follows
  from the fact that $B$ is connected.
%
  For the second condition assume to the contrary that there are $x\in B'$ and $y\in A$ such that
  $x=(x_{1},\dots,x_{n})$ and $y = (y_{1},\dots,y_{n})$ disagree in at most $n-1$ components.  Then there exists a
  common component $x_{l}=y_{l}$.  By construction there exists $z=(z_{1},\dots,z_{n})\in B$ such that
  $z_{l}=y_{l}=x_{l}$, hence $A\cup B$ is connected, in contradiction to the assumptions.  This shows that each $B$ has
  a product structure.

  Write $B=S^{B}_{1}\times\dots\times S^{B}_{n}$ for each $B\in\mathbf{B}$.  Obviously $S^{B}_{i}\cap
  S^{B'}_{i}=\emptyset$ for all $i\in[n]$ and all $B,B'\in\mathbf{B}$ if $B\neq B'$.  The second assertion claims that
  $\cup_{B\in\mathbf{B}}S^{B}_{i}=\Xcal_{i}$ for all $i\in[n]$: Assume to the contrary that $l\in\Xcal_{i}$ is contained in
  no $S^{B}_{i}$.  Take any $B$ and define $B':= S^{B}_{1}\times\dots\times (S^{B}_{i}\cup\{l\})\times\dots\times
  S^{B}_{n}$.  Then $\left(\mathbf{B}\setminus B\right)\cup\{B'\}$ is a coarser $1$-robustness structure.

  Now assume that $\mathbf{B}$ is a $1$-robustness structure satisfying the two assertions of the theorem.  For any
  $x\in\Xcal\setminus\cup\mathbf{B}$ there exists $y\in\cup\Bbf$ such that $x_{1}=y_{1}$, and hence $(x,y)$ is an edge
  in $G_{1}$.
  This implies maximality.
\end{proof}

The last result can be reformulated in terms of $n$-partite graphs
generalizing~\cite{Fink11:Binomial_ideal_of_intersection_axiom}: Namely, the $1$-robustness structures are in
one-to-one relation with the $n$-partite subgraphs of $K_{d_{1},\dots,d_{n}}$ such that every connected component is itself a
complete $n$-partite subgraph $K_{e_{1},\dots,e_{n}}$ with $e_{i}>0$ for all $i\in[n]$.  Here, an $n$-partite subgraph
is a graph which can be coloured by $n$ colours such that no two vertices with the same colour are connected by an edge.

Unfortunately the nice product form of the maximal $1$-robustness structures does not generalize to $k>1$:
\begin{ex}[Binary three inputs]
  If $n=3$ and $d_{1}=d_{2}=d_{3}=2$ and $k=2$, then the graph $G_{\Rcal}$ is the graph of the cube. For a maximal 1-robustness structure $\Bbf$ the
  set $\Xcal\setminus\cup\Bbf$ can be any one of
  the following: 
  \begin{itemize}
  \item The empty set
  \item A set of cardinality 4 corresponding to a plane leaving two connected components of size 2
  \item A set of cardinality 4 containing all vertices with the same parity.
  \item A set of cardinality 3 cutting off a vertex.
  \end{itemize}
  An example for the last case is
  \begin{equation*}
    \mathbf{B} := \left\{\{(1,1,1)\}, \{(2,2,2),(2,2,1),(2,1,2),(1,2,2)\}\right\}.
  \end{equation*}
  Only the isolated vertex has a product structure.
\end{ex}

Generically, the smaller $k$, the easier it is to describe the structure of all $k$-robustness structures.  We have seen
above that the cases $k=0$ and $k=1$ are particularly nice.  One might expect that all $k$-robustness
structures are also $(k+1)$-robustness structures for all $k$.
Unfortunately, this is not true in general:
\begin{ex}
  Consider $n=4$ binary random variables $X_{1},\dots,X_{4}$.  Then
  \begin{equation*}
    \mathbf{B} := \left\{ \{(1,1,1,1),(2,2,1,1)\}, \{(1,2,2,2), (2,1,2,2)\} \right\}
  \end{equation*}
  is a maximal $2$-robustness structure.  Both elements of $\mathbf{B}$ are $\sim_{2}$-connected, but not
  $\sim_{3}$-connected.
\end{ex}

The following two lemmas relate $k$-robustness to $l$ robustness for $l>k$:
\begin{lemma}
  Let $\mathbf{B}$ be a $k$-robustness structure.  For every $l > k$ there exists an $l$-robustness structure $\Bbf'$
  such that the following holds: For any $\Ycal\in\Bbf$ there exists precisely one $\Ycal'\in\Bbf'$ such that
  $\Ycal\subseteq\Ycal'$.
\end{lemma}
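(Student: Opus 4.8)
The plan is to prove the stated containment by \emph{enlarging} the vertex set carrying the $l$-robustness structure. Recall that in the symmetric case two configurations $x,x'\in\Xcal$ are joined by an edge in $G_{m}$ exactly when they agree on at least $m$ coordinates; since $l>k$, agreement on at least $l$ coordinates forces agreement on at least $k$, so $G_{l}$ is a subgraph of $G_{k}$ on the common vertex set $\Xcal$. Hence if one kept the vertex set $\cup\Bbf$ fixed and passed to $G_{l}$, the components would only \emph{refine}---the reverse of what is claimed. To force a $G_{k}$-connected block $\Ycal$ into a single $G_{l}$-connected block, I must instead adjoin auxiliary ``bridge'' configurations. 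Throughout I assume $l\le n-1$, which is the only non-degenerate range: for $l=n$ the graph $G_{n}$ is edgeless, every block of an $n$-robustness structure is a singleton, and the asserted containment can hold only for singleton blocks of $\Bbf$.

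The key tool is an elementary bridging fact. If $x,x'\in\Xcal$ agree on at least $k$ coordinates, I connect them by a path $x=z^{0},z^{1},\dots,z^{t}=x'$ in which each $z^{j+1}$ differs from $z^{j}$ in a single coordinate, changing the (at most $n-k$) coordinates on which $x$ and $x'$ disagree one at a time. Consecutive configurations then agree on $n-1\ge l$ coordinates, so each is a $G_{l}$-edge and the whole path lies in $G_{l}$. I apply this blockwise: fix $\Ycal\in\Bbf$, choose a spanning tree of the connected induced graph $G_{k,\Ycal}$, and for each tree edge $\{x,x'\}$ adjoin the vertices of such a $G_{l}$-bridging path. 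Letting $\Zcal_{\Ycal}$ denote $\Ycal$ together with all bridge vertices so produced, the set $\Zcal_{\Ycal}\subseteq\Xcal$ is $G_{l}$-connected and contains $\Ycal$.

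Finally I set $\cup\Bbf':=\bigcup_{\Ycal\in\Bbf}\Zcal_{\Ycal}$ and take $\Bbf'$ to be the set of connected components of $G_{l,\cup\Bbf'}$, which by Definition~\ref{def:robustness-structure} is a bona fide $l$-robustness structure. For each $\Ycal\in\Bbf$ the superset $\Zcal_{\Ycal}$ is $G_{l}$-connected, and its internal $G_{l}$-edges persist in the induced graph $G_{l,\cup\Bbf'}$; hence $\Zcal_{\Ycal}$, and a fortiori $\Ycal$, lies inside one connected component $\Ycal'\in\Bbf'$. As $\Bbf'$ partitions $\cup\Bbf'$, that component is unique, which gives the claim. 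It is harmless if the bridges merge several of the sets $\Zcal_{\Ycal}$ into one component: each original block $\Ycal$ is then still contained in exactly one $\Ycal'$.

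The main obstacle here is conceptual rather than computational: one must resist reusing the vertex set $\cup\Bbf$, since for $l>k$ that yields a refinement (the reverse containment) instead of the claimed coarsening, and this is precisely the pitfall to avoid. Once the direction is fixed, the only real verification is that single-coordinate moves are $G_{l}$-edges, which is exactly where $l\le n-1$ is used. I note that the crude choice $\Bbf'=\{\Xcal\}$ already satisfies the statement whenever $l\le n-1$, because $G_{l}$ is then connected on all of $\Xcal$; the bridging construction is preferable only because it enlarges $\cup\Bbf$ as little as possible, producing a $\Bbf'$ genuinely comparable to $\Bbf$.
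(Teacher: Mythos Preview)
Your argument is correct and takes a genuinely different route from the paper. The paper proves the lemma indirectly through the algebraic machinery of Section~\ref{sec:gen-binom-edge-ideals}: since $\Rcal_{k}$-robustness implies $\Rcal_{l}$-robustness, the irreducible set $\overline{\Pcal_{\Bbf}}$ sits inside $\Pcal_{l}$, hence inside a single irreducible component $\overline{\Pcal_{\Bbf'}}$ for some \emph{maximal} $l$-robustness structure $\Bbf'$; the containment of blocks is then read off from the ideal inclusion via (a mild extension of) Lemma~\ref{lem:VGYcontainsVGZ}. Your proof is instead purely combinatorial: you enlarge $\cup\Bbf$ by single-coordinate bridge paths so that each $G_{k}$-block becomes $G_{l}$-connected, and take components. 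This is more elementary and entirely self-contained---indeed, as you note, the bare statement is almost trivial once one sees that $\Bbf'=\{\Xcal\}$ already works for $l\le n-1$. What the paper's approach buys is that its $\Bbf'$ is automatically \emph{maximal}, which is a bit more information than the lemma asserts; your bridging construction need not yield a maximal $\Bbf'$, though one could of course pass to a maximal one containing it.

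Your remark about the degenerate case $l=n$ is also well taken: since $G_{n}$ is edgeless, every $n$-robustness structure consists of singletons, and the conclusion fails whenever $\Bbf$ has a block of size $\ge 2$. The paper does not flag this boundary case explicitly.
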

\begin{proof}
  The statements \eqref{eq:CI-collection} for $k$ imply the same statements of $l$, so
  $\overline{\Pcal_{\mathbf{B}}}$ is a closed subset of $\Pcal_{l}$.  Thus
  $\overline{\Pcal_{\mathbf{B}}}$ lies in one irreducible subset $\overline{\Pcal_{\mathbf{B}'}}$ of
  $\Pcal_{l}$.  The statement now follows from Lemma~\ref{lem:VGYcontainsVGZ}.
\end{proof}

\begin{lemma}
  \label{lem:smallk}
  Assume that $d_{1}=\dots=d_{n}=2$, and let $\mathbf{B}$ be a maximal $k$-robustness structure of binary random
  variables.  Then each $B\in\mathbf{B}$ is connected as a subset of $G_{s}$ for all $s\le n - 2 k$.
\end{lemma}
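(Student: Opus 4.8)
The statement concerns binary variables and asserts that each block $B$ of a maximal $k$-robustness structure is connected in $G_s$ whenever $s \le n - 2k$. Recall that in $G_s$ two words $x, x' \in \{1,2\}^n$ are joined by an edge iff they agree on some set $R$ of size $\ge s$, equivalently iff their Hamming distance is $\le n - s$. In particular the edge set of $G_s$ grows as $s$ decreases, so it suffices to prove connectivity in $G_{n-2k}$ itself; the claim for smaller $s$ follows. So the goal reduces to: if $B$ is a block of a maximal $k$-robustness structure, then any two $x, x' \in B$ can be joined by a path of steps each of Hamming distance $\le 2k$.

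**Key steps.** First I would fix $x, x' \in B$ and use that $B$ is $\sim_k$-connected (connected in $G_k$, i.e.\ in the Hamming-distance-$\le n-k$ graph) to get a path $x = z^{(0)}, z^{(1)}, \dots, z^{(m)} = x'$ inside $B$ with consecutive Hamming distances $\le n - k$. Thus it suffices to treat a single such step: given $z, w \in B$ with $d_H(z, w) \le n - k$, produce a $G_{n-2k}$-path between them inside $\cup\Bbf$ (and then, since the blocks partition $\cup\Bbf$ into connected components of $G_k \supseteq G_{n-2k}$, such a path automatically stays inside $B$). Write $D = \{ i : z_i \ne w_i\}$, so $|D| \le n - k$. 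If $|D| \le 2k$ we are done in one step, so assume $|D| > 2k$. The idea is to interpolate: split $D$ into pieces of size $\le 2k$ and flip them one block at a time, obtaining intermediate words $u^{(0)} = z, u^{(1)}, \dots, u^{(t)} = w$ with each consecutive pair at Hamming distance $\le 2k$. The only issue is that the intermediate words $u^{(j)}$ need not lie in $\cup\Bbf$. This is exactly where maximality of $\Bbf$ must be invoked, via condition (1) or (2) of Definition~\ref{def:robustness-structure}: if some $u^{(j)} \notin \cup\Bbf$, then adding it would merge two components of $G_{k, \cup\Bbf}$, so $u^{(j)}$ has two neighbours in $G_k$ lying in distinct blocks — but I actually want to stay within one block, so instead I would argue that I can reroute. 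Concretely, I expect the cleaner route is: show that for $z, w \in B$ with $d_H(z,w) \le n-k$ there is a vertex $v \in B$ with $d_H(v, z) \le 2k$ and $d_H(v, w) \le d_H(z,w) - k$ or similar, and induct on $d_H(z,w)$; maximality guarantees that a suitable $v$, obtained by flipping $\le 2k$ coordinates of $z$ toward $w$, actually lies in $\cup\Bbf$ and hence (being within Hamming distance $n-k$ of $z \in B$) in $B$ itself.

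**Making the maximality argument precise.** To get $v \in \cup\Bbf$: start from $z$ and flip a chosen set $E \subseteq D$ of $2k$ coordinates toward $w$, giving a candidate $v_0$. If $v_0 \notin \cup\Bbf$, then by Definition~\ref{def:robustness-structure}(1) there are edges $(v_0, y), (v_0, y')$ in $G_k$ with $y, y' \in \cup\Bbf$ in different blocks; since $z \in B$ and $d_H(v_0, z) = 2k \le n - k$ (using $k \le n/3$, which holds because $s = n - 2k \ge 0$ forces $k \le n/2$, and actually we need a bit more care here — possibly the bound $s \le n-2k$ together with $s \ge 0$ only gives $2k \le n$, so $d_H(v_0,z)=2k \le n$, not obviously $\le n-k$). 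This gap — ensuring the interpolation steps genuinely have Hamming distance $\le n-k$ so they connect to $B$ in $G_k$ — is the main obstacle. I would handle it by choosing the flip sets more conservatively: flip only $k$ coordinates at a time rather than $2k$, so each intermediate word is within Hamming distance $k \le n-k$ (valid since $k \le n/2$) of the previous one *and* within $G_{n-2k}$ of it (since $k \le 2k = n - s$). Then the sequence $z = u^{(0)}, u^{(1)}, \dots$ flipping $k$ coordinates of $D$ at a time toward $w$ has consecutive $G_{n-2k}$-edges, and each $u^{(j)}$ is within Hamming distance $\le n - k$ of its predecessor, hence the argument that each $u^{(j)} \in \cup\Bbf$ (via maximality: if not, $u^{(j)}$ would merge components, but it is $G_k$-adjacent to $u^{(j-1)}$ which is already shown to be in $B$, forcing $u^{(j)} \in B$ or a contradiction with the block being a full component) goes through by induction on $j$. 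Assembling: every step of the original $G_k$-path $z^{(i)} \to z^{(i+1)}$ is refined into a $G_{n-2k}$-path inside $B$, proving $B$ is connected in $G_{n-2k}$, hence in $G_s$ for all $s \le n - 2k$.
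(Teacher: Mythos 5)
There is a genuine gap, and it sits exactly at the point you flag as ``where maximality of $\Bbf$ must be invoked.'' Your appeal to maximality is not valid as stated: Definition~\ref{def:robustness-structure} says that a vertex $u\notin\cup\Bbf$ is $G_k$-adjacent to \emph{two different} blocks, so the fact that your interpolant $u^{(j)}$ is $G_k$-adjacent to $u^{(j-1)}\in B$ produces no contradiction whatsoever --- $u^{(j)}$ could perfectly well lie outside $\cup\Bbf$ while being adjacent both to $B$ and to some other block $A$, which is precisely what maximality predicts. To conclude $u^{(j)}\in\cup\Bbf$ you must show the converse: that the interpolant \emph{cannot} be $G_k$-adjacent to any block other than $B$, so that adding it would fail to merge components, contradicting maximality. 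That step is the combinatorial heart of the lemma and is absent from your proposal. The paper supplies it by a counting argument: after normalizing the two endpoints to $I_0$ (all zeros) and $I_l$ ($l$ ones followed by zeros) with $l\ge 2k$, it takes the midpoint $I_m$, $m=\lceil l/2\rceil$, and shows by counting disagreements (the $a,b,c$ computation) that any $x$ agreeing with $I_m$ in at least $k$ coordinates must disagree with $I_0$ or with $I_l$ in at most $n-\lfloor l/2\rfloor\le n-k$ coordinates; hence such an $x$ in another block $A$ would make $A\cup B$ connected in $G_k$, which is impossible. Only then does maximality force $I_m\in\cup\Bbf$, hence $I_m\in B$, and the argument is iterated by bisection until consecutive points are within distance $n-s$.

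The underlying inequality is that for endpoints at Hamming distance $l$ one has $d(x,I_0)+d(x,I_l)\le l+2(n-l)=2n-l$ for every $x$, so $\min\bigl(d(x,I_0),d(x,I_l)\bigr)\le n-k$ as soon as $l\ge 2k$; note this does not depend on the interpolant at all, so once you add this lemma your ``flip $k$ coordinates at a time'' scheme can indeed be repaired (the relevant endpoints being the current pair at distance $>2k$). Without it, though, the induction ``each $u^{(j)}\in\cup\Bbf$'' simply does not go through. Two smaller slips: the obstacle you diagnose (keeping interpolation steps within Hamming distance $n-k$) is not the real difficulty, and your parenthetical claim that a $G_{n-2k}$-path inside $\cup\Bbf$ automatically stays in $B$ uses $G_{n-2k}\subseteq G_k$, which requires $n\ge 3k$ and is false in general (e.g.\ $n=5$, $k=2$); this is harmless only because your construction aims to place every interpolant in $B$ directly.
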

\begin{proof}
  We can identify elements of $\Xcal$ with $01$-strings of length $n$.  Denote $I_{r}$ the string $1\dots10\dots0$ of
  $r$ ones and $n-r$ zeroes in this order.  Without loss of generality assume that $I_{0}, I_{l}$ are two elements of
  $B\in\mathbf{B}$, where $k \ge n-l < s$.  
  Let $m = \lceil\frac{l}{2}\rceil$ and consider $I_{m}$.  We want to prove that we can replace $B$ by $B\cup\{I_{m}\}$
  and obtain another, coarser $k$-robustness structure.  By maximality this will imply that $I_{0}$ and
  $I_{l}$ are indeed connected by a path in $G_{s}$.

  Otherwise there exists $A\in\mathbf{B}$ and $x\in A$ such that $x$ and $I_{m}$ agree in at least $k$ components.
  Let $a$ be the number of zeroes in the first $m$ components of $x$, let $b$ be the number of ones in the components
  from $m+1$ to $l$ and let $c$ be the number of ones in the last $n-l$ components.  Then $I_{m}$ and $x$ disagree in
  $a+b+c \le n-k$ components.  On the other hand, $x$ and $I_{0}$ disagree in $(m-a) + b + c$ components, and $x$ and
  $I_{l}$ disagree in $a + ((l-m)-b) + c \le a + (m-b) + c$ components.  Assume that $a\ge b$ (otherwise exchange
  $I_{0}$ and $I_{l}$).  Then $x$ and $I_{0}$ disagree in at most $m + c \le \lceil\frac{l}{2}\rceil + n - l = n -
  \lfloor\frac{l}{2}\rfloor \le n-k$ components, so $A\cup B$ is connected, in contradiction to the assumptions.
\end{proof}

\section{Generalized binomial edge ideals}
\label{sec:gen-binom-edge-ideals}

We refer to~\cite{CoxLittleOShea08:Ideals_Varieties_Algorithms} for an introduction to the algebraic terminology that is
used in this section.

Let $\Xcal$ be a finite set, $d_{0}>1$ an integer, and denote $\tilde\Xcal=\Xcal_{0}\times\Xcal$.  Fix a field $\KK$.
Consider the polynomial ring $R = \KK[p_{x}:x\in\tilde\Xcal]$ with $|\tilde\Xcal|$ unknowns $p_{x}$ indexed by
$\tilde\Xcal$.  For all $i,j\in\Xcal_{0}$ and all $x,y\in\Xcal$ let
\begin{equation*}
  f^{ij}_{xy} = p_{ix} p_{jy} - p_{iy} p_{jx}.
\end{equation*}
For any graph $G$ on $\Xcal$ the ideal $I_{G}$ in $R$ generated by the binomials $f^{ij}_{xy}$ for all $i,j\in\Xcal_{0}$
and all edges $(x,y)$ in $G$ is called the $d_{0}$th \emph{binomial edge ideal} of $G$ over $\KK$.  This is a direct
generalization of~\cite{HHHKR10:Binomial_Edge_Ideals} and~\cite{Ohtani09:Ideals_of_some_2-minors}, where the same ideals
have been considered in the special case $d_{0}=2$.

Choose a total order $>$ on $\Xcal$ (e.g.~choose a bijection $\Xcal\cong[|\Xcal|]$).  This induces a lexicographic
monomial order, that will also be denoted by $>$, via
\begin{equation*}
  p_{ix}> p_{jy} \qquad\Longleftrightarrow\qquad
  \begin{cases}
    \text{ either } & i > j,\\
    \text{ or } & i = j \text{ and } x > y.
  \end{cases}
\end{equation*}
A Gröbner basis for $I_{G}$ with respect to this order can be constructed using the
following definitions:
\begin{defi}
  \label{def:admissible-path}
  A path $\pi:x=x_0,x_1,\ldots,x_r=y$ from $x$ to $y$ in $\Xcal$ is called \emph{admissible} if
  \begin{enumerate}
  \item[(i)] $x_k\neq x_\ell$ for $k\neq \ell$, and $x < y$;
  \item[(ii)] for each $k=1,\ldots,r-1$ either $x_k<x$ or $x_k>y$;
  \item[(iii)] for any proper subset $\{y_1,\ldots,y_s\}$ of $\{x_1,\ldots,x_{r-1}\}$, the sequence $x,y_1,\ldots,y_s,y$
    is not a path.
  \end{enumerate}
  A function $\kappa: \{0,\dots,r\}\to [d]$ is called \emph{$\pi$-antitone} if it satisfies
  \begin{equation}
    x_{s} < x_{t} \Longrightarrow \kappa(s) \ge \kappa(t),\text{ for all }1 \le s,t \le r.
  \end{equation}
  $\kappa$ is \emph{strictly $\pi$-antitone} if it is $\pi$-antitone and satisfies $\kappa(0) > \kappa(r)$.
\end{defi}

The notion of $\pi$-antitonicity also applies to paths which are not necessarily admissible.  However, since admissible
paths are \emph{injective} (i.e.~they only pass at most once at each vertex), we may write $\kappa(\ell)$ in the
admissible case, instead of $\kappa(s)$, if $\ell=\pi(s)$.

For any $x<y$, any admissible path
$\pi: x=x_0,x_1,\ldots,x_r=y$
from $x$ to $y$ and any $\pi$-antitone function $\kappa$ associate the monomial
\begin{equation*}
  u_{\pi}^{\kappa}= \prod_{k=1}^{r-1}p_{\kappa(k)x_k}.
\end{equation*}

\begin{thm}
  \label{thm:Gbasis}
  The set of binomials
  \begin{multline*}
    {\mathcal G}
    = \smash{\bigcup_{i<j} \,\bigg\{}
    \,u_{\pi}^{\kappa}f_{xy}^{\kappa(y)\kappa(x)}\,:\; x<y,\; \pi \text{ is an admissible path in }G\text{ from $x$ to $y$},
      \\
      \kappa\text{ is strictly $\pi$-antitone}\,\smash[t]{\bigg\}}
  \end{multline*}
  is a reduced Gröbner basis of $I_G$ with respect to the monomial order introduced above.
\end{thm}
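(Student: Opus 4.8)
The plan is to follow the standard strategy for proving that an explicit set is a Gröbner basis: first verify that each element of $\mathcal{G}$ actually lies in $I_G$, then compute the initial ideal $\ini(I_G)$ by an independent argument and check that it agrees with the ideal generated by the leading terms of $\mathcal{G}$, and finally confirm reducedness. For membership, note that $f_{xy}^{\kappa(y)\kappa(x)}$ is a generator of $I_G$ only when $(x,y)$ is an edge of $G$; for a general admissible path $\pi$ one must show $u_\pi^\kappa f_{xy}^{\kappa(y)\kappa(x)}\in I_G$ by a telescoping identity. Concretely, along the path $x=x_0,x_1,\dots,x_r=y$ with a $\pi$-antitone $\kappa$, one writes a telescoping sum of the binomials $f^{\kappa(k)\,\kappa(k+1)}_{x_k\,x_{k+1}}$ (each attached to the edge $(x_k,x_{k+1})$, hence in $I_G$), multiplied by suitable monomials in the $p_{\kappa(\ell)x_\ell}$, and checks that the sum collapses to $u_\pi^\kappa f_{xy}^{\kappa(y)\kappa(x)}$ up to sign. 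The $\pi$-antitonicity of $\kappa$ is exactly what makes every intermediate monomial that appears have the right "staircase" shape so that consecutive terms cancel.

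Next I would identify the leading term of each basis element under the lexicographic order. With $i=\kappa(y)$, $j=\kappa(x)$ and $i<j$ (strict $\pi$-antitonicity gives $\kappa(0)>\kappa(r)$, i.e.\ $j>i$, so after the relabelling in the union $\bigcup_{i<j}$ the roles are as written), the leading monomial of $f_{xy}^{ij}=p_{ix}p_{jy}-p_{iy}p_{jx}$ is $p_{iy}p_{jx}=p_{\kappa(y)\,y}\,p_{\kappa(x)\,x}$ — using that $x<y$ forces $p_{iy}>p_{ix}$ and $p_{jx}<p_{jy}$, and a short case check on which of $p_{iy}p_{jx}$, $p_{ix}p_{jy}$ dominates. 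Hence $\operatorname{in}_<(u_\pi^\kappa f_{xy}^{\kappa(y)\kappa(x)}) = p_{\kappa(x)x}\,p_{\kappa(y)y}\prod_{k=1}^{r-1}p_{\kappa(k)x_k}$, a squarefree monomial. The core of the proof is then to show that the ideal $J$ generated by all these squarefree monomials already equals $\ini(I_G)$; since $J\subseteq\ini(I_G)$ is immediate from membership, it suffices to prove $\dim_\KK (R/J)_{\le d}\le \dim_\KK(R/I_G)_{\le d}$ in every degree, or equivalently to show that the standard monomials of $J$ are $\KK$-linearly independent modulo $I_G$. This is done by exhibiting, for each standard monomial, an explicit point (or a toric/combinatorial parametrization of $V(I_G)$) on which it does not vanish but all larger monomials in $I_G$ do — concretely, $V(I_G)$ is parametrized block-by-block on the connected components of $G$ by rank-one matrices, and a standard monomial of $J$ corresponds precisely to a choice that is compatible with some such rank-one assignment on each component, whereas a monomial divisible by some $\operatorname{in}_<(g)$ forces a $2\times 2$ minor to appear.

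Once $\ini(I_G)$ is pinned down, Buchberger's criterion is automatic ($\mathcal{G}$ generates $I_G$ and its leading terms generate $\ini(I_G)$, so it is a Gröbner basis), and it remains to check that $\mathcal{G}$ is \emph{reduced}: that each $g\in\mathcal{G}$ has leading coefficient $1$ (clear) and that no monomial of any $g$ is divisible by the leading term of another $g'\in\mathcal{G}$. For the non-leading term $p_{\kappa(x)y}p_{\kappa(y)x}\prod_k p_{\kappa(k)x_k}$ this requires checking that it contains no squarefree "path monomial" belonging to another admissible-path/antitone-function pair; condition (iii) in the definition of admissible path (no proper sub-path) is precisely the combinatorial input that rules this out, together with condition (ii) which controls the order of the interior vertices. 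I expect the main obstacle to be the linear-independence step — showing the standard monomials of the monomial ideal $J$ remain independent modulo $I_G$ — since this is where one needs a genuinely global understanding of the variety $V(I_G)$ (its decomposition into components indexed by the connected components of $G$, each a variety of rank-one matrices) rather than a purely local leading-term bookkeeping; the telescoping identity and the reducedness check, while fiddly, are essentially formal once the combinatorics of admissible paths is set up.
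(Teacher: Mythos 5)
Your overall architecture (membership, then pin down the initial ideal, then reducedness) is legitimate, and the membership step is fine: a label-shuffling/telescoping argument along the path does put $u_\pi^\kappa f_{xy}^{\kappa(y)\kappa(x)}$ into $I_G$ (though the binomials that actually occur are edge binomials with various colour pairs, not only $f^{\kappa(k)\kappa(k+1)}_{x_kx_{k+1}}$, and antitonicity plays no role here — membership holds for arbitrary $\kappa$; the paper instead does an induction splitting the path at $\max\{x_k:x_k<x\}$). But there are two genuine problems. First, you have the leading terms backwards: in the given order the colour index dominates, so for $i<j$ and $x<y$ the initial term of $f^{ij}_{xy}=p_{ix}p_{jy}-p_{iy}p_{jx}$ is $p_{ix}p_{jy}$, not $p_{iy}p_{jx}$; hence $\ini\bigl(u_\pi^\kappa f_{xy}^{\kappa(y)\kappa(x)}\bigr)=p_{\kappa(y)x}\,p_{\kappa(x)y}\,u_\pi^\kappa$, i.e.\ the path monomial whose endpoint labels \emph{violate} antitonicity (this is exactly what Lemma~\ref{lem:pi-antiton} uses). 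Your candidate monomial ideal $J$, your description of its standard monomials as ``antitone-compatible'' choices, and your reducedness check all inherit this swap, so the argument as written does not go through even at the bookkeeping level.

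Second, and more seriously, the core step — that the leading terms of $\mathcal G$ generate all of $\ini(I_G)$ — is not actually proved. Exhibiting, for each standard monomial of $J$, one point of $V(I_G)$ where it does not vanish establishes nothing about linear independence modulo $I_G$; for that you would need a triangular system of evaluations, a weight/parametrization argument, or a genuine standard monomial theory for these ideals, none of which is supplied, and you yourself flag this as the main obstacle. The paper takes a different and complete route: it verifies Buchberger's criterion directly, observing that every $S$-pair is a binomial homogeneous with respect to the colour and vertex multigradings, and then using Lemma~\ref{lem:pi-antiton} to show that a nonzero remainder after reduction would contain two distinct monomials of the same multidegree, each forced to be the unique minimal monomial of that multidegree — a contradiction. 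Unless you repair the leading-term identification and replace the point-evaluation sketch by an actual independence (or Buchberger-type) argument, the proposal has a gap precisely where the theorem's content lies.
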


The proof makes use of the following lemma, which explains $\pi$-antitonicity:
\begin{lemma}
  \label{lem:pi-antiton}
  Let $\pi:x_{0},\dots,x_{r}$ be a path in $G$, and let $\kappa:\{0,\dots,r\}\to [d]$ be an arbitrary function.  If
  $\kappa$ is not $\pi$-antitone, then there exists $g\in\mathcal{G}$ such that $\ini(g)$ divides the monomial
  $u_{\pi}^{\kappa}= \prod_{k=1}^{r-1}p_{\kappa(k)x_k}$.
\end{lemma}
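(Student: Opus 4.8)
The plan is to suppose that $\kappa$ fails to be $\pi$-antitone and extract from $\pi$ an admissible subpath together with a strictly antitone labelling whose associated leading term divides $u_\pi^\kappa$. First I would record what it means for the leading term of a basis element to divide $u_\pi^\kappa$: for an admissible path $\sigma: x = z_0, z_1, \dots, z_s = y$ (with $x<y$) and strictly $\sigma$-antitone $\lambda$, the binomial $u_\sigma^\lambda f^{\lambda(y)\lambda(x)}_{xy}$ has leading monomial (with respect to the lexicographic order) equal to $\left(\prod_{k=1}^{s-1} p_{\lambda(z_k)z_k}\right) p_{\lambda(x)x} p_{\lambda(y)y}$, because $\lambda(x) > \lambda(y)$ forces $p_{\lambda(x)x}p_{\lambda(y)y} \succ p_{\lambda(y)x}p_{\lambda(x)y}$ in $f^{\lambda(y)\lambda(x)}_{xy}$, and the prefactor $u_\sigma^\lambda$ is a monomial. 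So the leading monomial is $\prod_{k=0}^{s} p_{\lambda(z_k)z_k}$. Thus it suffices to produce an admissible path $\sigma$ inside the vertex set $\{x_0,\dots,x_r\}$ and a strictly $\sigma$-antitone $\lambda$ such that $\lambda$ agrees with $\kappa$ on the vertices of $\sigma$ (reading labels through the vertex, which is legitimate since admissible paths are injective).

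Next I would locate the failure of antitonicity. Since $\kappa$ is not $\pi$-antitone, there are indices $s,t \in \{1,\dots,r\}$ with $x_s < x_t$ but $\kappa(s) < \kappa(t)$. Consider the subpath of $\pi$ between $x_s$ and $x_t$; I would orient it so that the endpoint with the \emph{smaller} label in the total order on $\Xcal$ comes first, say it runs $a = y_0, y_1, \dots, y_m = b$ with $a < b$, so that $\{a,b\} = \{x_s, x_t\}$ and, crucially, $\kappa$ takes strictly different values at the two endpoints with the larger value at the vertex that is smaller in $\Xcal$ — i.e. the labels are ``going up'' from $b$ back to $a$, which after reorienting means $\kappa(a) > \kappa(b)$. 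Then I would pass to a \emph{minimal} subpath: among all paths inside $\{y_0,\dots,y_m\}$ from $a$ to $b$, take one, $\sigma: a = z_0,\dots,z_s = b$, that is minimal with respect to omitting interior vertices (this is exactly condition (iii) of Definition~\ref{def:admissible-path}). Injectivity gives (i). The remaining task is condition (ii): every interior vertex $z_k$ satisfies $z_k < a$ or $z_k > b$. If some interior $z_k$ lies strictly between $a$ and $b$ in the order on $\Xcal$, I would split $\sigma$ at the first such vertex and recurse — either on the subpath from $a$ to $z_k$ (if $\kappa(z_k) < \kappa(a)$) or on the subpath from $z_k$ to $b$ (if $\kappa(z_k) > \kappa(b)$); since the endpoints of $\sigma$ already have strictly different $\kappa$-values ordered the ``wrong'' way, at least one of these two subpaths again has endpoints violating antitonicity, and it is strictly shorter, so the recursion terminates at an admissible path $\tau: c = w_0, \dots, w_q = e$ with $c<e$, $\kappa(c) > \kappa(e)$, and all interior vertices either below $c$ or above $e$.

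Finally I would define $\lambda := \kappa$ restricted to the vertices of $\tau$ and check it is strictly $\tau$-antitone. Strictness, $\lambda(c) > \lambda(e)$, holds by construction. For antitonicity on interior vertices: if $w_k < w_\ell$ are two interior vertices (or one of them an endpoint), I need $\kappa(w_k) \ge \kappa(w_\ell)$ — but this is not automatic from $\kappa$, so here is where I expect the real obstacle, and I would handle it by \emph{modifying} $\lambda$ rather than using $\kappa$ verbatim: since every interior vertex $w_k$ satisfies $w_k < c$ or $w_k > e$, I can redefine $\lambda(w_k) := d$ (the top value of $[d]$) whenever $w_k < c$ and $\lambda(w_k) := 1$ whenever $w_k > e$, while keeping $\lambda(c) = \kappa(c)$ and $\lambda(e) = \kappa(e)$. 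One checks this $\lambda$ is $\tau$-antitone: any two vertices below $c$ get the same value $d$; any two above $e$ get the same value $1$; a vertex below $c$ (value $d$) is smaller in $\Xcal$ than one above $e$ (value $1$) and $d \ge 1$; and comparing interior vertices with the endpoints $c,e$ uses $1 \le \kappa(e) \le \kappa(c) \le d$. But then the resulting monomial $u_\tau^\lambda$ need no longer divide $u_\pi^\kappa$, because I have changed the labels — so in fact the cleanest route is the opposite one: do \emph{not} modify, and instead strengthen the minimality argument so that the extracted $\tau$ already has $\kappa$ itself antitone on its interior. Concretely, I would choose the violating pair $(s,t)$ and then the subpath so as to \emph{minimize} the number of interior ``inversions'' of $\kappa$; if $\tau$ still had an interior inversion $w_k < w_\ell$ with $\kappa(w_k) < \kappa(w_\ell)$, I would apply the same extraction to that pair inside $\tau$, obtaining a strictly shorter admissible path still with the vertex set contained in $\{x_0,\dots,x_r\}$, and iterate. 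Termination (the path length strictly decreases, and a single-edge admissible path trivially has no interior inversion) yields an admissible $\tau$ on which $\kappa$ is strictly antitone, and then $g := u_\tau^\kappa f^{\kappa(e)\kappa(c)}_{ce} \in \mathcal G$ has $\ini(g) = \prod p_{\kappa(w_j)w_j}$ dividing $u_\pi^\kappa = \prod_{k=1}^{r-1} p_{\kappa(k)x_k}$, since the vertices of $\tau$ (endpoints included) all lie among $x_0,\dots,x_r$ and the labels match. The main work, and the step I expect to be delicate, is organizing this double induction so that admissibility conditions (ii) and (iii) are simultaneously achieved while keeping the vertex set inside $\{x_0,\dots,x_r\}$ and the labels unchanged; everything else is bookkeeping with the lexicographic order.
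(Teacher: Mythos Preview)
Your leading-term computation is wrong, and this breaks the whole strategy. With the paper's lexicographic order ($p_{ix}>p_{jy}$ iff $i>j$, or $i=j$ and $x>y$), the largest variable appearing in $f^{\lambda(y)\lambda(x)}_{xy}=p_{\lambda(y)x}p_{\lambda(x)y}-p_{\lambda(y)y}p_{\lambda(x)x}$ (with $x<y$ and $\lambda(x)>\lambda(y)$) is $p_{\lambda(x)y}$, not $p_{\lambda(x)x}$. Hence $\ini(f^{\lambda(y)\lambda(x)}_{xy})=p_{\lambda(y)x}p_{\lambda(x)y}$, the \emph{cross} term, and the initial term of $u_\tau^\lambda f^{\lambda(e)\lambda(c)}_{ce}$ is $\bigl(\prod_{k=1}^{q-1}p_{\lambda(w_k)w_k}\bigr)\,p_{\lambda(e)c}\,p_{\lambda(c)e}$, not $\prod_{k}p_{\lambda(w_k)w_k}$. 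So if you take $\lambda=\kappa$ on $\tau$, the initial monomial carries the factors $p_{\kappa(e)c}$ and $p_{\kappa(c)e}$, which do \emph{not} occur in $u_\pi^\kappa$. (You also flip a sign when reorienting: from $x_s<x_t$ and $\kappa(s)<\kappa(t)$ you conclude $\kappa(a)>\kappa(b)$ with $a<b$, but in fact $a=x_s$, $b=x_t$ and $\kappa(a)<\kappa(b)$.)

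The paper's proof exploits precisely this cross-term phenomenon. One takes a \emph{minimal} subpath $\tau:y_0,\dots,y_s$ of $\pi$ on which $\kappa$ still fails antitonicity; minimality forces $\kappa$ to be antitone on $\tau$ with either endpoint removed, which both proves admissibility of $\tau$ (no interior vertex can lie between $y_0$ and $y_s$) and guarantees that the function obtained by \emph{swapping} the two endpoint values of $\kappa$ is strictly $\tau$-antitone. The point of the swap is that the initial term then reads $p_{\kappa(0)y_0}p_{\kappa(s)y_s}\prod_{k=1}^{s-1}p_{\kappa(k)y_k}$, i.e.\ exactly the original $\kappa$-labels at every vertex of $\tau$, and so it divides $u_\pi^\kappa$. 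Your recursive scheme aimed at making $\kappa$ itself antitone on $\tau$ cannot work: that forces $\kappa(c)>\kappa(e)$, contradicting the violation you started from, and even if achieved it yields the wrong initial monomial. The missing idea is the endpoint swap together with the one-shot minimality argument that makes the swapped labelling antitone.
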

\begin{proof}
  Let $\tau:y_{0},\dots,y_{s}$ be a minimal subpath of $\pi$ with respect to the property that the restriction of
  $\kappa$ to $\tau$ is not $\tau$-antitone.  This means that $\kappa$ is $\tau_{0}$-antitone and $\tau_{s}$-antitone,
  where $\tau_{0}=y_{1},\dots,y_{s}$ and $\tau_{s}=y_{0},\dots,y_{s-1}$.  Assume without loss of generality that
  $y_{0}<y_{s}$, otherwise reverse $\tau$.  The minimality implies that $\kappa(y_{0})<\kappa(y_{s})$.  It follows that
  $\tau$ is admissible: By minimality, if $y_{0}<y_{k}<y_{s}$, then $\kappa(y_{k}) \ge \kappa(y_{s}) > \kappa(y_{0}) \ge
  \kappa(y_{k})$, a contradiction.  Define
  \begin{equation*}
    \kappa(k) =
    \begin{cases}
      \kappa(s), & \text{ if }k=0, \\
      \kappa(0), & \text{ if }k=s, \\
      \kappa(k), & \text{ if }0<k<s.
    \end{cases}
  \end{equation*}
  Then $\kappa$ is $\tau$-antitone, and
  $\ini(u_{\tau}^{\kappa}f_{y_{0}y_{s}}^{\kappa(y_{s})\kappa(y_{0})})$ divides $u_{\pi}^{\kappa}$.
\end{proof}

\begin{proof}[Proof of Theorem~\ref{thm:Gbasis}]
  The proof is organized in three steps.

  \medskip
  \noindent
  {\em Step 1: ${\mathcal G}$ is a subset of $I_G$.}
  Let $\pi: x=x_0,x_1,\ldots,x_{r-1},x_r=y$ be an admissible path in $G$.  We show that $u_\pi^{\kappa}
  f_{xy}^{\kappa(j)\kappa(i)}$ belongs to $I_G$ using induction on $r$.  Clearly the assertion is true if $r = 1$, so
  assume $r > 1$.  Let $A = \{ x_k : x_k < x \}$ and $B = \{ x_\ell : x_\ell > y \}$.  Then either $A \neq \emptyset$ or
  $B \neq \emptyset$.

  Suppose $A \neq \emptyset$ and set $x_{k} = \max A$.  The two paths $\pi_1 : x_{k}, x_{k-1}, \ldots, x_1, x_0=x$
  and $\pi_2 : x_{k}, x_{k+1}, \ldots, x_{r-1}, x_r = y$ in $G$ are admissible.
  Let $\kappa_{1}$ and $\kappa_{2}$ be the restrictions of $\kappa$ to $\pi_{1}$ and $\pi_{2}$.  Let $a=\kappa(r)$,
  $b=\kappa(0)$ and $c=\kappa(k)$.  The calculation
  \begin{multline*}
    (p_{by}p_{ax}-p_{bx}p_{ay})p_{cx_{k}}
    \\
    = (p_{cx}p_{bx_{k}} - p_{cx_{k}}p_{bx})p_{ay}
    - (p_{cx}p_{ax_{k}} - p_{cx_{k}}p_{ax})p_{by}
    - p_{cx}(p_{bx_{k}}p_{ay}- p_{by}p_{ax_{k}})
  \end{multline*}
  implies that $u_{\pi}^{\kappa}f_{xy}^{ab}$ lies in the ideal generated by $u_{\pi_{1}}^{\kappa_{1}}f_{xx_{k}}^{bc}$,
  $u_{\pi_{1}}^{\kappa_{1}}f_{xx_{k}}^{ac}$ and $u_{\pi_{2}}^{\kappa_{2}}f_{x_{k}y}^{ab}$.  By induction it lies in $I_{G}$.

  The case $B \neq \emptyset$ can be treated similarly.

  \medskip
  \noindent
  {\em Step 2: ${\mathcal G}$ is a Gröbner basis of $I_G$.}
  Let $\pi:x_0,\dots,x_{r}$ and $\sigma:y_{0},\dots,y_{s}$ be admissible paths in $G$ with $x_{0}<x_{r}$ and
  $y_{0}<y_{s}$, and let $\kappa$ and $\mu$ be $\pi$- and $\sigma$-antitone.  By Buchberger's criterion we need to show
  that the $S$-pairs $s := S(u_\pi^{\kappa} f_{x_{0}x_{r}}^{\kappa(r)\kappa(0)}, u_\sigma^{\mu} f_{y_{0}y_{s}}^{\mu(s)\mu(0})$ reduces
  to zero.

  If $S\neq 0$, then $S$ is a binomial.  Write $S=S_{1}-S_{2}$, where $S_{1}=\ini(S)$.
  $S$ is homogeneous with respect to the multidegrees given by
  \begin{equation*}
    \deg(p_{zm})_{b} = \delta_{zb} =
    \begin{cases}
      1, & \text{ if } z=b, \\
      0, & \text{ else.}
    \end{cases}
  \end{equation*}
  and
  \begin{equation*}
    \deg(p_{zm})_{n} = \delta_{mn} =
    \begin{cases}
      1, & \text{ if } m=n, \\
      0, & \text{ else.}
    \end{cases}
  \end{equation*}

  If $\pi$ and $\sigma$ are disjoint, then $S=0$, since $u_\pi^{\kappa} f_{x_{0}x_{r}}^{\kappa(r)\kappa(0)}$ and
  $u_\sigma^{\mu} f_{y_{0}y_{s}}^{\mu(s)\mu(0)}$ contain different variables.  The same happens if the intersection of
  $\pi$ and $\sigma$ does not involve the starting or end points of $\pi$ and $\sigma$, since in this case $S$ is
  proportional to the $S$-pair of the two monomials $u_\pi^{\kappa}$ and $u_\sigma^{\mu}$.

  Assume that $\pi$ and $\sigma$ meet and that $S\neq 0$.  Then $S_{1}$ and $S_{2}$ are monomials, and the unknowns
  $p_{ix}$ occurring in $S_{1}$ and $S_{2}$ satisfy $x\in\pi\cup\sigma$.
  Assume that there are $x<y$ such that $D_{x}:=\min\{i\in\Xcal_{0}: p_{ix}\,|\, S_{1}\} < \max\{i\in\Xcal_{0}:
  p_{iy}\,|\,S_{1}\}=:D_{y}$.  Since $\pi\cup\sigma$ is connected there is an injective path $\tau:z_{0},\dots,z_{s}$
  from $x=z_{0}$ to $y=z_{s}$ in $\pi\cup\sigma$.  Choose a map $\lambda:\{0,\dots,s\}$ such that $\lambda(0) = D_{x}$,
  $\lambda(s) = D_{y}$ and $p_{\lambda(a)a}\,|\,S_{1}$ for all $0\le a\le s$.  Then $u_{\tau}^{\lambda}$ divides
  $S_{1}$, and $\lambda$ is not $\tau$-antitone.  So we can apply Lemma~\ref{lem:pi-antiton} in order to reduce $S$ to a
  smaller binomial.

  Let $S'$ be the reduction of $S$ modulo $\Gcal$.  If $S'\neq 0$, then let $S'_{1}=\ini(S')$.  The above argument shows
  that $\min\{i\in\Xcal_{0}: p_{ix}\,|\, S'_{1}\} \ge \max\{i\in\Xcal_{0}: p_{iy}\,|\,S'_{1}\}$ for all $x<y$.  This
  property characterizes $S'_{1}$ as the unique minimal monomial in $R$ with multidegree $\deg(S'_{1}) = \deg(S)$.  But
  since the reduction algorithm turns binomials into binomials, $S'-S'_{1}$ is also a monomial of multidegree $\deg(S)$,
  and smaller than $\deg(S'_{1})$.  This contradiction shows $S'=0$.

  \medskip
  \noindent
  {\em Step 3: $\Gcal$ is reduced.}
  Let $\pi:x_0,\dots,x_{r}$ and $\sigma:y_{0},\dots,y_{s}$ be admissible paths in $G$ with $x_{0}<x_{r}$ and
  $y_{0}<y_{s}$, and let $\kappa$ and $\mu$ be $\pi$- and $\sigma$-antitone.
  Let $u=\kappa(r),v=\kappa(0),w=\mu(s),t=\mu(0)$, and suppose that $u_\pi^{\kappa} p_{ux_{0}}p_{vx_{r}}$ divides either
  $u_\sigma^{\mu} p_{wy_{0}}p_{ty_{s}}$ or $u_\sigma^{\mu} p_{wy_{s}} p_{ty_{0}}$.  Then $\{ x_0, \ldots, x_r \}$ is
  a 
  subset of $\{ y_0, \ldots, y_s \}$, and $\kappa(b) = \mu(\sigma^{-1}(x_{b}))$ for $0<b<r$.
  %

  If $x_{0}=y_{0}$ and $x_{r}=y_{s}$, then $\pi$ is a sub-path of $\sigma$.  By Definition~\ref{def:admissible-path},
  $\pi$ equals $\sigma$ (up to a possible change of direction).  Hence $u_\pi^{\kappa}
  f_{x_{0}x_{r}}^{\kappa(r)\kappa(0)}$ and $u_\sigma^{\mu} f_{y_{0}y_{s}}^{\mu(s)\mu(0)}$ have the same (total) degree,
  hence they agree.

  If $x_{0}=y_{0}$ and $x_{r}\neq y_{s}$, then $p_{vx_{r}}$ divides $u_{\sigma}^{\mu}$, and so $x_{r}=y_{t}$ for some
  $t<s$ such that $v=\mu(t)$.  Then $y_{t}=x_{r}>x_{0}=y_{0}$, and hence $v\le\mu(0)=\kappa(0)<\kappa(r)$, in
  contradiction to $x_{0}<x_{r}$.
  A similar argument
  applies if $x_{0}\neq y_{0}$ and $x_{r}=y_{s}$.
  Finally, if $x_{0}\neq y_{0}$ and $x_{r}\neq y_{s}$, then $p_{ux_{0}}p_{vx_{r}}$ divides $u_\sigma^{\mu}$.  This
  implies $u=\kappa(0)=\kappa(j)=v$, a contradiction.
\end{proof}

\begin{cor}
  \label{cor:radical}
  $I_G$ is a radical ideal.
\end{cor}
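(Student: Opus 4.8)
The plan is to deduce radicality directly from the structure of the Gröbner basis computed in Theorem~\ref{thm:Gbasis}, via the standard criterion that an ideal whose initial ideal (with respect to some monomial order) is squarefree is necessarily radical. So the first step is to recall this criterion: if $\ini_{<}(I)$ is a radical (equivalently, squarefree) monomial ideal for some monomial order $<$, then $I$ is radical. This is a well-known fact (see, e.g., the references on Gröbner degenerations), and it lets me reduce the whole problem to a combinatorial statement about the leading monomials of the elements of $\mathcal{G}$.

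The second step, which is the heart of the argument, is to show that each initial monomial $\ini_{<}(g)$ for $g\in\mathcal{G}$ is squarefree. By Theorem~\ref{thm:Gbasis} a general element of $\mathcal{G}$ has the form $u_{\pi}^{\kappa}f_{xy}^{\kappa(y)\kappa(x)}$, where $\pi:x=x_0,x_1,\dots,x_r=y$ is an admissible path, $\kappa$ is strictly $\pi$-antitone, and $u_{\pi}^{\kappa}=\prod_{k=1}^{r-1}p_{\kappa(k)x_k}$. The leading term of $f_{xy}^{\kappa(y)\kappa(x)}=p_{\kappa(y)x}p_{\kappa(x)y}-p_{\kappa(y)y}p_{\kappa(x)x}$ with respect to the lexicographic order is $p_{\kappa(x)y}p_{\kappa(y)x}$ — here I use that $\kappa(x)=\kappa(0)>\kappa(r)=\kappa(y)$ by strict $\pi$-antitonicity, so $p_{\kappa(x)y}$ is the larger variable. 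Hence $\ini_{<}(g)=p_{\kappa(x)y}\,p_{\kappa(y)x}\prod_{k=1}^{r-1}p_{\kappa(k)x_k}$. Since $\pi$ is admissible it is injective as a path, so the vertices $x=x_0,x_1,\dots,x_{r-1},x_r=y$ are pairwise distinct; each variable appearing in $\ini_{<}(g)$ carries a distinct second index $x_k$ (the two "endpoint" variables are indexed by $y=x_r$ and $x=x_0$ respectively, and the interior ones by $x_1,\dots,x_{r-1}$), so no variable is repeated and $\ini_{<}(g)$ is squarefree.

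The third step just assembles the conclusion: $\ini_{<}(I_G)$ is generated by squarefree monomials, hence is a radical monomial ideal, and therefore $I_G$ is radical. I expect the only delicate point to be making sure the leading term of $f_{xy}^{\kappa(y)\kappa(x)}$ is correctly identified, i.e.\ that the superscript convention $f^{ij}_{xy}=p_{ix}p_{jy}-p_{iy}p_{jx}$ together with strict $\pi$-antitonicity $\kappa(0)>\kappa(r)$ genuinely forces $p_{\kappa(x)y}p_{\kappa(y)x}$ (and not the other monomial) to be the initial term under the lexicographic order $p_{ix}>p_{jy}\iff i>j$ or ($i=j$ and $x>y$); this follows because $\kappa(x)>\kappa(y)$ makes $p_{\kappa(x)y}$ the dominant variable regardless of how $x$ and $y$ compare. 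Everything else is bookkeeping that is already implicit in the proof of Theorem~\ref{thm:Gbasis}.
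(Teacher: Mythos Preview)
Your proposal is correct and follows exactly the same approach as the paper: both deduce radicality from Theorem~\ref{thm:Gbasis} via the standard fact that an ideal with a Gr\"obner basis having squarefree initial terms is radical. The paper states this in one sentence and defers details to~\cite{HHHKR10:Binomial_Edge_Ideals}, whereas you spell out explicitly why each leading monomial $\ini(u_\pi^\kappa f_{xy}^{\kappa(y)\kappa(x)})$ is squarefree (using injectivity of the admissible path to ensure distinct second indices), which is a welcome elaboration but not a different argument.
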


\begin{proof}
  The assertion follows from Theorem~\ref{thm:Gbasis} the following general fact: A graded ideal that has a Gröbner
  basis with square-free initial terms is radical.  See the proof of~\cite[Corollary~2.2]{HHHKR10:Binomial_Edge_Ideals} for the details.
\end{proof}

Since $I_{G}$ is radical, in order to compute the primary decomposition of the ideal it is enough to compute the minimal
primes.
We are mainly interested in the irreducible decomposition of the variety $V_{G}$ of $I_{G}$ in the case of
characteristic zero.  While the basic arguments remain true for finite base fields there is no relation between the
primary decomposition of an ideal and the irreducible decomposition of its variety, since the irreducible decomposition
consists of all closed points in this case.  The following definition is needed: Two vectors $v,w$ (living in the same
$\KK$-vector space) are \emph{proportional} whenever $v=\lambda w$ or $w=\lambda v$ for some $\lambda\in\KK$.  A set of
vectors is \emph{proportional} if each pair is proportional.  Since $\lambda=0$ is allowed, proportionality is not
transitive: If $v$ and $w$ are proportional and if $u$ and $v$ are proportional, then we can conclude that $u$ and $w$
must be proportional only if $v\neq 0$.

We now study the solution variety $V_{G}$ of $I_{G}$, which is a subset of $\KK^{\Xcal_{0}\times\Xcal}$.  As usual,
elements of $\KK^{\Xcal_{0}\times\Xcal}$ will be denoted with the same symbol $p=(p_{ix})_{i\in\Xcal_{0},x\in\Xcal}$ as the
unknowns in the polynomial ring $R = \KK[p_{ix}:(i,x)\in\Xcal_{0}\times\Xcal]$.  Such a $p$ can be written as a $d_{0}\times|\Xcal|$-matrix.  Each binomial
equation in $I_{G}$ imposes conditions on this matrix saying that certain submatrices have rank 1.  For a fixed edge
$(x,y)$ in $G$ the equations $f^{ij}_{xy}=0$ for all $i,j\in\Xcal_{0}$ require that the submatrix
$(p_{kz})_{k\in\Xcal_{0},z\in\{x,y\}}$ has rank one.  More generally, if $K\subset\Xcal$ is a clique (i.e.~a complete
subgraph), then the submatrix $(p_{kz})_{k\in\Xcal_{0},z\in K}$ has rank one.  This means that all columns of this
submatrix are proportional.  The columns of $p$ will be denoted by $\tilde p_{x}$, $x\in\Xcal$.  
  A point $p$ lies in $V_{G}$ if and only if $\tilde p_{x}$ and $\tilde p_{y}$ are proportional for all edges
  $(x,y)$ of $G$.

Even if the graph $G$ is connected, not all columns $\tilde p_{x}$ must be proportional to each other, since
proportionality is not a transitive relation.  Instead, there are ``blocks'' of columns such that all columns within one
block are proportional.

For any $p\in\KK^{\Xcal_{0}\times\Xcal}$ let $G_{p}$ be the subgraph of $G$ induced by
$\supp\tilde p:=\{x\in\Xcal:\tilde p_{x}\neq 0\}$.  
We have shown:
\begin{itemize}
\item A point $p$ lies in $V_{G}$ if and only if $\tilde p_{x}$ and $\tilde p_{y}$ are proportional whenever
  $x,y\in\supp\tilde p$ lie in the same connected component of $G_{p}$.
\end{itemize}

For any subset $\Ycal\subseteq\Xcal$ denote $G_{\Ycal}$ the subgraph of $G$ induced by $\Ycal$.
Let $V_{G,\Ycal}$ be the set of all $p\in\KK^{\Xcal_{0}\times\Xcal}$ for which
$\tilde p_{x}=0$ for all $x\in\Xcal\setminus\Ycal$ and for which $\tilde p_{x}$ and $\tilde p_{y}$ are
proportional whenever $x,y\in\Xcal$ lie in the same connected component of $G_{\Ycal}$.  Then
\begin{equation}
  \label{eq:decomposition-VG}
  V_{G} = \cup_{\Ycal\subseteq\Xcal} V_{G,\Ycal}.
\end{equation}
The sets $V_{G,\Ycal}$ are irreducible algebraic varieties:
\begin{lemma}
  \label{lem:VGY-is-sirreducible}
  For any $\Ycal\subseteq\Xcal$ the set $V_{G,\Ycal}$ is the variety of the ideal $I_{G,\Ycal}$
  generated by the monomials
  \begin{equation}
    \label{eq:monomials}
    p_{ix} \qquad\text{for all }x\in\Xcal\setminus\Ycal\text{ and }i\in\Xcal_{0},
  \end{equation}
  and the binomials $f_{xy}^{ij}$ for all $i,j\in\Xcal_{0}$ and all $x,y\in\Ycal$ that lie in the same connected
  component of $G_{\Ycal}$.  The ideal $I_{G,\Ycal}$ is prime.
\end{lemma}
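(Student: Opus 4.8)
The plan is to treat the two claims in turn: the set-theoretic identification $V(I_{G,\Ycal})=V_{G,\Ycal}$, which is just a reading of the definitions, and the primeness of $I_{G,\Ycal}$, which carries the actual content. For the first claim, a matrix $p\in\KK^{\Xcal_{0}\times\Xcal}$ satisfies the monomial generators~\eqref{eq:monomials} if and only if $\tilde p_{x}=0$ for every $x\in\Xcal\setminus\Ycal$, and for fixed $x,y$ the equations $f_{xy}^{ij}(p)=0$ for all $i,j\in\Xcal_{0}$ hold precisely when the columns $\tilde p_{x}$ and $\tilde p_{y}$ are proportional. Since the binomial generators of $I_{G,\Ycal}$ are exactly the $f_{xy}^{ij}$ with $x,y$ in a common connected component of $G_{\Ycal}$, the locus $V(I_{G,\Ycal})$ is the set of $p$ with $\tilde p_{x}=0$ off $\Ycal$ and with $\tilde p_{x},\tilde p_{y}$ proportional whenever $x,y$ lie in the same connected component of $G_{\Ycal}$, which is exactly $V_{G,\Ycal}$.

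For primeness I would pass to the coordinate ring. Let $Y_{1},\dots,Y_{m}$ be the connected components of $G_{\Ycal}$ (these are subsets of $\Ycal$). Quotienting $R$ by the variables $p_{ix}$ with $x\notin\Ycal$ removes exactly those variables and leaves the binomial generators untouched, so that
\begin{equation*}
  R/I_{G,\Ycal}\;\cong\;\bigotimes_{c=1}^{m}\,\KK[p_{ix}:i\in\Xcal_{0},\,x\in Y_{c}]\,\big/\,J_{c},
\end{equation*}
a tensor product over $\KK$, where $J_{c}$ is generated by all $f_{xy}^{ij}$ with $x,y\in Y_{c}$, i.e.\ by the $2\times2$ minors of the generic submatrix $(p_{ix})_{i\in\Xcal_{0},x\in Y_{c}}$. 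Each factor is a domain: for $|Y_{c}|=1$ it is a polynomial ring, and for $|Y_{c}|\ge 2$ it is (using $d_{0}>1$) the classical rank-one determinantal ring, which over any field is isomorphic to the subring $\KK[v_{i}w_{x}:i\in\Xcal_{0},\,x\in Y_{c}]$ of a polynomial ring $\KK[v_{i},w_{x}]$ via $p_{ix}\mapsto v_{i}w_{x}$; this is the statement that the $2\times2$ minors generate the full vanishing ideal of the affine cone over a Segre variety, and it can also be recovered within the present framework from Theorem~\ref{thm:Gbasis} applied to a complete graph together with the standard monomial basis it produces. Taking disjoint families of auxiliary variables $v^{(c)},w^{(c)}$ for the various components, these isomorphisms assemble into a $\KK$-algebra homomorphism from $R/I_{G,\Ycal}$ into the polynomial ring $\KK[v^{(c)}_{i},w^{(c)}_{x}]$; since a tensor product over the field $\KK$ of injective $\KK$-linear maps is again injective, this homomorphism is injective. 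Hence $R/I_{G,\Ycal}$ is a domain, and $I_{G,\Ycal}$ is prime.

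The main obstacle is the primeness statement, and essentially all of its substance is the classical primeness of the ideal of $2\times2$ minors of a generic matrix; the remaining points to get right are the clean decomposition of $R/I_{G,\Ycal}$ as a tensor product indexed by the connected components of $G_{\Ycal}$, and — if one wants the statement over an arbitrary base field rather than only over an algebraically closed one — the use of the explicit Segre parametrization in place of a Nullstellensatz argument, since the determinantal factors must remain domains after arbitrary field extension. As an alternative route, valid over algebraically closed fields, one can instead observe that $I_{G,\Ycal}$ has a Gröbner basis consisting of the monomials~\eqref{eq:monomials} together with the binomials produced by Theorem~\ref{thm:Gbasis} for the disjoint union of the complete graphs on the $Y_{c}$ — the monomial generators have leading terms coprime to all other generators and so contribute no new $S$-pairs — whence $I_{G,\Ycal}$ is radical with a square-free initial ideal, and radicality together with the irreducibility of $V_{G,\Ycal}$ established in the first step yields primeness.
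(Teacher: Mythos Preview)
Your proof is correct and follows essentially the same line as the paper's: both dispose of the set-theoretic identification from the definitions, and for primeness both reduce to the classical fact that the ideal of $2\times2$ minors of a generic matrix (the Segre cone) is prime, then use that the generators for different connected components and for the killed variables involve disjoint sets of unknowns. Your treatment of this last step is in fact more explicit than the paper's --- the paper simply asserts that a sum of prime ideals in disjoint variable sets is prime, whereas you justify this by embedding the tensor product into a single polynomial ring, which has the pleasant side effect of making the argument transparent over an arbitrary field.

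One small remark on your alternative route: the first step establishes only $V(I_{G,\Ycal})=V_{G,\Ycal}$, not that this variety is irreducible, so that route does not stand on its own as written; you would need to supply irreducibility separately (for instance via the Segre parametrization you already described).
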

\begin{proof}
  The first statement follows from the definition of $V_{G,\Ycal}$.  Write $I^{1}_{G,\Ycal}$ for the ideal generated by
  all monomials \eqref{eq:monomials}, and for any $\Zcal\subseteq\Ycal$ write $I^{2}_{\Zcal}$ for the ideal generated by
  the binomials $f_{xy}^{ij}$, with $i,j\in\Xcal_{0}$ and $x,y\in\Zcal$.  Then $I^{1}_{G,\Ycal}$ is obviously prime.
  Each of the $I^{2}_{\Zcal}$ is a $2\times2$ determinantal ideal.  It is a classical (but difficult) result that this
  ideal is the defining ideal of a Segre embedding, and that it is prime (see~\cite{Sturmfels91:GB_of_toric_varieties}
  for a rather modern proof).  The ideal $I_{G,\Ycal}$ is the sum of the prime ideal $I^{1}_{G,\Ycal}$ and the prime
  ideals $I^{2}_{\Zcal}$ for all connected components $\Zcal$ of $G_{\Ycal}$, and since the defining equations of all
  these ideals involve disjoint sets of unknowns, $I_{G,\Ycal}$ itself is prime.
\end{proof}
The decomposition~\eqref{eq:decomposition-VG} is not the irreducible decomposition of $V_{G}$, because the union is
redundant.  Let $\Ycal,\Zcal\subseteq\Xcal$.  Using Lemma~\ref{lem:VGY-is-sirreducible} it is easy to remove the
redundant components:
\begin{lemma}
  \label{lem:VGYcontainsVGZ}
  Let $\Ycal,\Zcal\subseteq\Xcal$.  Then $V_{G,\Ycal}$ contains $V_{G,\Zcal}$ if and only
  if the following two conditions are satisfied:
  \begin{itemize}
  \item $\Zcal\subseteq\Ycal$.
  \item If $x,y\in\Zcal$ are connected in $G_{\Ycal}$, then they are connected in $G_{\Zcal}$.
  \end{itemize}
\end{lemma}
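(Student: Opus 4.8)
The plan is to prove both directions of the equivalence by working directly with the variety description of $V_{G,\Ycal}$ given in Lemma~\ref{lem:VGY-is-sirreducible}, rather than with the ideals, since containment of irreducible varieties is easiest to test on generic points.

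\medskip
\noindent
\textbf{Sufficiency.}
First I would assume the two bulleted conditions and show $V_{G,\Zcal}\subseteq V_{G,\Ycal}$ pointwise.  Take $p\in V_{G,\Zcal}$.  Then $\tilde p_{x}=0$ for all $x\notin\Zcal$, and since $\Zcal\subseteq\Ycal$, also $\tilde p_{x}=0$ for all $x\notin\Ycal$, so the monomial conditions~\eqref{eq:monomials} for $\Ycal$ are satisfied.  It remains to check that $\tilde p_{x}$ and $\tilde p_{y}$ are proportional whenever $x,y$ lie in the same connected component of $G_{\Ycal}$.  If either $x\notin\Zcal$ or $y\notin\Zcal$, then one of the columns is zero and proportionality is automatic.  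If $x,y\in\Zcal$ and they are connected in $G_{\Ycal}$, then by the second condition they are connected in $G_{\Zcal}$, so $\tilde p_{x}$ and $\tilde p_{y}$ are proportional because $p\in V_{G,\Zcal}$.  Hence $p\in V_{G,\Ycal}$.

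\medskip
\noindent
\textbf{Necessity.}
For the converse, assume $V_{G,\Zcal}\subseteq V_{G,\Ycal}$ and derive the two conditions.  For $\Zcal\subseteq\Ycal$: pick any $x_{0}\in\Zcal$ and let $p$ be the point with $\tilde p_{x_{0}}$ equal to some fixed nonzero vector and $\tilde p_{x}=0$ for $x\neq x_{0}$.  This $p$ lies in $V_{G,\Zcal}$ (all proportionality conditions are vacuous since at most one column is nonzero), hence in $V_{G,\Ycal}$, which forces $\tilde p_{x}=0$ for all $x\notin\Ycal$; since $\tilde p_{x_{0}}\neq 0$ this gives $x_{0}\in\Ycal$.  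Thus $\Zcal\subseteq\Ycal$.  For the second condition, suppose $x,y\in\Zcal$ are connected in $G_{\Ycal}$ but not in $G_{\Zcal}$; I will build a point $p\in V_{G,\Zcal}\setminus V_{G,\Ycal}$.  Let $\Ccal_{x}$ be the connected component of $x$ in $G_{\Zcal}$ and set $\tilde p_{z}$ equal to a fixed nonzero vector $v$ for every $z\in\Ccal_{x}$, $\tilde p_{z}$ equal to a second nonzero vector $w$ not proportional to $v$ for every $z$ in the component of $y$ in $G_{\Zcal}$, and $\tilde p_{z}=0$ for all remaining $z$ (this uses $d_{0}>1$ so that non-proportional vectors exist).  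Then $p\in V_{G,\Zcal}$ by construction, but $\tilde p_{x}=v$ and $\tilde p_{y}=w$ are not proportional while $x,y$ lie in the same component of $G_{\Ycal}$, so $p\notin V_{G,\Ycal}$, a contradiction.

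\medskip
The main thing to be careful about is the necessity of the second condition: the witness point must be consistent with \emph{all} the proportionality constraints of $V_{G,\Zcal}$ simultaneously, which is why it is essential to make the columns constant on each connected component of $G_{\Zcal}$ and to put $x$ and $y$ in \emph{different} components (guaranteed by the assumption that they are not connected in $G_{\Zcal}$); one also needs $d_{0}>1$ to have room for two non-proportional vectors, a hypothesis that holds by the standing assumptions on $\Xcal_{0}$.
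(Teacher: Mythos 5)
Your proposal is correct and takes essentially the same route as the paper: sufficiency is the straightforward verification that the defining conditions of $V_{G,\Ycal}$ hold on $V_{G,\Zcal}$ (the paper phrases this as containment of the generators of one ideal $I_{G,\cdot}$ in the other), and necessity uses the same kind of witness points as the paper, namely a matrix supported on a single column of $\Zcal$ to force $\Zcal\subseteq\Ycal$ and a matrix with two non-proportional columns $v,w$ constant on connected components to force the connectivity condition. No gaps; your explicit care that the witness is constant on components of $G_{\Zcal}$ and that $d_{0}>1$ supplies non-proportional vectors matches the paper's construction.
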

\begin{proof}
  Assume that $V_{G,\Ycal}\subseteq V_{G,\Zcal}$.  Then $I_{G,\Ycal}\supseteq I_{G,\Zcal}$.  For
  any $x\in\Xcal\setminus\Zcal$ and any $i\in\Xcal_{0}$ this implies $p_{ix}\in I_{G,\Ycal}$.  On the
  other hand, Lemma~\ref{lem:VGY-is-sirreducible} shows that the point with coordinates
  \begin{equation*}
    p_{iy}=
    \begin{cases}
      1, & \qquad\text{if }y\in\Ycal,\\
      0, & \qquad\text{else},
    \end{cases}
  \end{equation*}
  lies in $V_{G,\Ycal}$, and hence in $V_{G,\Zcal}$.  This implies $x\in\Ycal$.

  Let $x\in\Zcal$.  
  Choose two linearly independent non-zero vectors $v,w\in\KK^{d_{0}}$.  By Lemma~\ref{lem:VGY-is-sirreducible} the
  matrix with columns
  \begin{equation*}
    \tilde p_{y}=
    \begin{cases}
      v, & \qquad\text{if }y\text{ is connected to }x\text{ in }G_{\Ycal},\\
      w, & \qquad\text{if }y\in\Ycal\text{ is not connected to }x\text{ in }G_{\Ycal},\\
      0, & \qquad\text{else},
    \end{cases}
  \end{equation*}
  is contained in $V_{G,\Ycal}$ and hence in $V_{G,\Zcal}$.  Therefore, if $z$ is connected to $x$ in
  $G_{\Ycal}$, then it is connected to $x$ in $G_{\Zcal}$.

  Conversely, if the two conditions are satisfied, then all defining equations of $I_{G,\Zcal}$ lie in $I_{G,\Ycal}$.
\end{proof}
\begin{thm}
  \label{thm:primary-decomposition}
  The primary decomposition of $V_{G}$ is
  \begin{equation*}
    I_{G} = \cap_{\Ycal} I_{G,\Ycal},
  \end{equation*}
  where the intersection is over all $\Ycal\subseteq\Xcal$ such that the following holds: For any
  $x\in\Xcal\setminus\Ycal$ there are edges $(x,y)$, $(x,z)$ in $G$ such that $y,z\in\Ycal$ are not
  connected in $G_{\Ycal}$.  Equivalently, for any $x\in\Xcal\setminus\Ycal$ the induced subgraph
  $G_{\Ycal\cup\{x\}}$ has less connected components than $G_{\Ycal}$.
\end{thm}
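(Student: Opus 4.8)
The plan is to prove that the primary decomposition of $I_G$ is the intersection $\cap_\Ycal I_{G,\Ycal}$ over the $\Ycal$ satisfying the stated maximality condition, by combining the decomposition~\eqref{eq:decomposition-VG} with Corollary~\ref{cor:radical}, Lemma~\ref{lem:VGY-is-sirreducible}, and Lemma~\ref{lem:VGYcontainsVGZ}. Since $I_G$ is radical (Corollary~\ref{cor:radical}), its primary decomposition is just the intersection of the minimal primes, which correspond to the irreducible components of $V_G$. So the whole theorem reduces to identifying those components.

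First I would start from~\eqref{eq:decomposition-VG}, $V_G = \cup_{\Ycal\subseteq\Xcal} V_{G,\Ycal}$. By Lemma~\ref{lem:VGY-is-sirreducible} each $V_{G,\Ycal}$ is irreducible (it is the variety of a prime ideal), so the irreducible components of $V_G$ form a subset of $\{V_{G,\Ycal} : \Ycal\subseteq\Xcal\}$ — namely the maximal members under inclusion. Lemma~\ref{lem:VGYcontainsVGZ} gives a combinatorial criterion for containment $V_{G,\Zcal}\subseteq V_{G,\Ycal}$. I would then argue that $V_{G,\Ycal}$ is a maximal (hence irreducible component) precisely when $\Ycal$ satisfies the condition in the theorem: for every $x\in\Xcal\setminus\Ycal$, adding $x$ merges two connected components of $G_\Ycal$. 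Indeed, if this fails for some $x$, then $\Zcal:=\Ycal\cup\{x\}$ has at least as many components as $G_\Ycal$ — but more carefully, one checks that $G_\Zcal$ restricted to $\Ycal$-connected pairs still separates them, so by Lemma~\ref{lem:VGYcontainsVGZ} we get $V_{G,\Ycal}\subsetneq V_{G,\Zcal}$, showing $V_{G,\Ycal}$ is not maximal. Conversely, if $\Ycal$ satisfies the condition, I must show no $\Zcal\supsetneq\Ycal$ with $V_{G,\Ycal}\subseteq V_{G,\Zcal}$ can exist: taking any $x\in\Zcal\setminus\Ycal$, the two edges $(x,y),(x,z)$ with $y,z\in\Ycal$ not $G_\Ycal$-connected become a path in $G_\Zcal$ connecting $y,z$, so by Lemma~\ref{lem:VGYcontainsVGZ} the second containment condition fails for the pair $(\Zcal\supseteq\Ycal)$ — wait, I need the containment $V_{G,\Zcal}\subseteq V_{G,\Ycal}$ to fail, which is exactly what the violated second bullet of Lemma~\ref{lem:VGYcontainsVGZ} gives. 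One should be a little careful handling $\Zcal$ that are not of the form $\Ycal\cup\{x\}$, but a single-element extension argument suffices since any strictly larger $\Zcal$ contains some such $x$, and $V_{G,\Ycal}\subseteq V_{G,\Zcal}$ would force $V_{G,\Ycal}\subseteq V_{G,\Ycal\cup\{x\}}$ by transitivity only if $V_{G,\Ycal\cup\{x\}}\subseteq V_{G,\Zcal}$, so instead I work directly: the claim is that $\Ycal$ satisfying the condition is $\subseteq$-maximal among all $\Ycal'$ with $V_{G,\Ycal}\subseteq V_{G,\Ycal'}$, equivalently (by Lemma~\ref{lem:VGYcontainsVGZ}) among all $\Ycal'\supseteq\Ycal$ that do not merge any $G_\Ycal$-disconnected $\Ycal$-pair; the condition says every proper superset merges such a pair, so $\Ycal$ is maximal.

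Once the identification of irreducible components with the distinguished $\Ycal$'s is done, the primary decomposition statement $I_G = \cap_\Ycal I_{G,\Ycal}$ follows: the radical ideal $I_G$ equals the intersection of its minimal primes, these are the $I_{G,\Ycal}$ for the distinguished $\Ycal$ (each prime by Lemma~\ref{lem:VGY-is-sirreducible}), and $V(\cap_\Ycal I_{G,\Ycal}) = \cup_\Ycal V_{G,\Ycal} = V_G$ together with radicality upgrades the set-theoretic equality to an ideal equality. Finally, I would note the equivalence of the two phrasings of the condition — ``there are edges $(x,y),(x,z)$ with $y,z$ disconnected in $G_\Ycal$'' versus ``$G_{\Ycal\cup\{x\}}$ has fewer components than $G_\Ycal$'' — which is elementary: adding a vertex can only decrease the number of components, and it does so iff the new vertex is adjacent to at least two previously-distinct components.

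The main obstacle I anticipate is the bookkeeping in the maximality argument: making fully precise that the $\subseteq$-maximal $\Ycal$ among those indexing distinct $V_{G,\Ycal}$ are exactly those in the theorem, given that Lemma~\ref{lem:VGYcontainsVGZ}'s containment criterion is two-sided and the connectivity condition interacts subtly with which vertices one is allowed to add. One must be careful that ``$V_{G,\Ycal}$ maximal'' is about $\Ycal$ being maximal in the containment preorder, not simply maximal as a subset of $\Xcal$ (the empty-ish $\Ycal$ and $\Xcal$ itself are extreme cases worth checking), and that the equivalence between the two stated formulations of maximality is handled cleanly.
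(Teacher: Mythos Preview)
Your approach is essentially that of the paper: use~\eqref{eq:decomposition-VG}, Lemma~\ref{lem:VGY-is-sirreducible}, and Lemma~\ref{lem:VGYcontainsVGZ} to identify the maximal $V_{G,\Ycal}$ as the irreducible components, then invoke radicality (Corollary~\ref{cor:radical}) to pass to the ideal-theoretic statement. The paper compresses your maximality bookkeeping into a single sentence (``This follows from Lemma~\ref{lem:VGYcontainsVGZ}''), but the content is the same, and your more explicit two-direction argument is correct once the stray ``wait'' is cleaned up.

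There is one genuine gap. Your final step---``$V(\cap_\Ycal I_{G,\Ycal}) = V_G$ together with radicality upgrades the set-theoretic equality to an ideal equality''---is a Nullstellensatz argument and only works over an algebraically closed field. In this paper $\KK$ is not assumed algebraically closed (in fact $\KK=\Rb$), so equality of varieties does not by itself give equality of radical ideals, and it is not a priori clear that the minimal primes of $I_G$ are exactly the $I_{G,\Ycal}$ rather than, say, further splitting over an extension. The paper handles this with an extra paragraph: it first runs your argument over the algebraic closure $\Cb$, and then invokes the Eisenbud--Sturmfels theory of binomial ideals to observe that, since the primary components $I_{G,\Ycal}$ are generated by pure differences (and monomials) already defined over $\KK$, the decomposition descends unchanged to the base field. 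You should add this step.
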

\begin{proof}
  First, assume that $\KK$ is algebraically closed.  By~\eqref{eq:decomposition-VG} and
  Lemma~\ref{lem:VGY-is-sirreducible} it suffices to show that the condition on $\Ycal$ stated in the theorem
  characterizes the maximal sets $V_{G,\Ycal}$ in the union~\eqref{eq:decomposition-VG} (with respect to inclusion).
  This follows from Lemma~\ref{lem:VGYcontainsVGZ}.

  If $\KK$ is not algebraically closed, then one can argue as follows: By~\cite{EisenbudSturmfels96:Binomial_Ideals} a
  binomial ideal has a binomial primary decomposition over some extension field
  $\hat\KK=\KK[\alpha_{1},\dots,\alpha_{k}]$.  The algebraic numbers $\alpha_{1},\dots,\alpha_{k}$ are coefficients of
  the defining equations of the primary components.  Let $\CC$ be the algebraic closure of $\KK$.  Since the ideals
  $I_{G,\Ycal}$ are defined by pure differences and since the ideals $\CC\otimes I_{G,\Ycal}$ are the primary
  components of $\CC\otimes I_{G,\Ycal}$ in $\CC\otimes R$ it follows that the ideals $I_{G,\Ycal}$ are already the
  primary components of $I_{G}$ (in other words, the primary decomposition is independent of the base field).
\end{proof}

\begin{rem}[Comparison to~\cite{HHHKR10:Binomial_Edge_Ideals}]
  Theorems~\ref{thm:Gbasis} and~\ref{thm:primary-decomposition} are generalizations of Theorems~2.1 and~3.2
  from~\cite{HHHKR10:Binomial_Edge_Ideals}.  While Theorem~2.1 in~\cite{HHHKR10:Binomial_Edge_Ideals} was proved with a
  case by case analysis, the proof of Theorem~\ref{thm:Gbasis} is much more conceptual.  The proof of
  Theorem~\ref{thm:primary-decomposition} relied on the irreducible decomposition of the corresponding variety.  On the
  other hand, the proof of Theorem~3.2 in~\cite{HHHKR10:Binomial_Edge_Ideals} directly proves the equality of the two
  ideals.
\end{rem}

\subsection*{Acknowledgement}

This work has been supported by the Volkswagen
Foundation and the Santa Fe Institute. Nihat Ay thanks David Krakauer and
Jessica Flack for many stimulating discussions on robustness.

\bibliographystyle{IEEEtranSpers}
\bibliography{general}

\end{document}